\begin{document}

\title*{Optimal Control of Tuberculosis: A Review\thanks{This is a preprint of a paper 
whose final and definite form will be published in the volume \emph{Mathematics of Planet Earth} 
that initiates the book series \emph{CIM Series in Mathematical Sciences} (CIM-MS) published by Springer.
Submitted Aug 2013; Revised and Accepted June 2014.}}

\author{Cristiana J. Silva and Delfim F. M. Torres}

\authorrunning{C. J. Silva and D. F. M. Torres}

\institute{Cristiana J. Silva
\at
Center for Research and Development in Mathematics and Applications (CIDMA),
Department of Mathematics, University of Aveiro, 3810--193 Aveiro, Portugal;
\email{cjoaosilva@ua.pt}
\and Delfim F. M. Torres
\at
Center for Research and Development in Mathematics and Applications (CIDMA),
Department of Mathematics, University of Aveiro, 3810--193 Aveiro, Portugal;
\email{delfim@ua.pt}}


\maketitle


\abstract{We review the optimal control of systems modeling the dynamics of tuberculosis.
Time dependent control functions are introduced in the mathematical models,
representing strategies for the improvement of the treatment and cure
of active infectious and/or latent individuals. Optimal control theory allows
then to find the optimal way to implement the strategies, minimizing the number
of infectious and/or latent individuals and keeping the cost of implementation
as low as possible. An optimal control problem is proposed and solved,
illustrating the procedure. Simulations show an effective reduction
in the number of infectious individuals.}


\medskip

\noindent \textbf{Keywords:} tuberculosis, mathematical models, optimal control.

\medskip

\noindent \textbf{2010 Mathematics Subject Classification:} 92D30 (Primary); 49M05 (Secondary).


\section{Introduction}

\emph{Mycobacterium tuberculosis} is the cause of most occurrences of tuberculosis (TB)
and is usually acquired via airborne infection from someone who has active TB.
It typically affects the lungs (pulmonary TB) but can affect other sites as well (extrapulmonary TB).
Only approximately 10\% of people infected with M. tuberculosis develop active TB disease.
Therefore, approximately 90\% of people infected remain latent. Latent infected TB people
are asymptomatic and do not transmit TB, but may progress to active TB through either endogenous
reactivation or exogenous reinfection \cite{Small_Fuj_2001,Styblo_1978}.
Following the World Health Organization (WHO), between 1995 and 2011,
51 million people were successfully treated for TB in countries that adopted the WHO strategy,
saving 20 million lives \cite{WHO:2012}. However, the global burden of TB remains enormous.
In 2011, there were an estimated 8.7 million new cases of TB (13\% co-infected with HIV)
and 1.4 million people died from TB \cite{WHO:2012}. The increase of new cases has been attributed
to the spread of HIV, the collapse of public health programs, the emergence of drug-resistant strains
of \emph{M. tuberculosis} \cite{Frieden:et:all:2003,Raviglione:2002,Raviglione:et:all:1997}
and exogenous re-infection, where a latently-infected individual acquires a new infection
from another infectious (see \cite{Bowong_2013,Chiang:2005,Castillo_Chavez_2000}
and references cited therein). In the absence of an effective vaccine, current control programs
for TB have focused on chemotherapy. Lack of compliance with drug treatments not only may lead
to a relapse but to the development of antibiotic resistant TB, called multidrug-resistant TB (MDR-TB),
which is one of the most serious public health problems facing society today \cite{SLenhart_2002}.
The progress in responding to multidrug-resistant TB remains slow. There are critical funding gaps
for TB care and control, which is critical to sustain recent gains, make further progress
and support research and development of new drugs and vaccines \cite{WHO:2012}.

Mathematical models are an important tool in analyzing the spread and control of infectious diseases \cite{Hethcote:SIAM:Review:2000}.
Understanding the transmission characteristics of the infectious diseases in communities,
regions and countries, can lead to better approaches to decrease the transmission of these diseases
\cite{Hethcote:1001models:1994,comSofia:Teresa:CapeVerde,MyID:274}. There are many mathematical dynamic models for TB, see, e.g.,
\cite{Blower_Small_Hopewell_1996,Castillo_Feng_1998,Cohen_Murray_2004,Dye_et_all_1998,Gomes_et_all_2004a,Epidemiology:2012,Vynnycky_Fine_1997}.
Most models consider that there are two different ways to progress to active disease after infection:
``fast progressors'' and ``slow progressors''. It is also considered that only 5 to 10\%
of the infected individuals are fast progressors. The remaining are able to contain the infection
(latent infected individuals) and have a much lower probability to develop active disease by endogenous reactivation.
More recent models also consider the possibility of latent and treated individuals being reinfected,
since it was already recognized that infection and/or disease do not confer full protection \cite{Verver_etall_2005}.
Models show that reinfection can be an important component of TB transmission and can have impact
on the efficacy of interventions \cite{Cohen_Murray_2004,Gomes_et_all_2004a,Rodrigues_et_all_2007,Vynnycky_Fine_1997}.
Here we focus on TB models that consider: development of drug resistant TB \cite{Castillo_Chavez_1997};
exogenous reinfection \cite{Bowong_2010,Bowong_2013,Emvudu_et_all,Castillo_Chavez_2000,Gomes_etall_2007,Okuonghae_2010};
fast and slow progression to infection \cite{Bowong_2010,Bowong_2013,Emvudu_et_all,Gomes_etall_2007};
post-exposure interventions \cite{Gomes_etall_2007}; immigration of infectious individuals \cite{Okuonghae_2010};
and time-dependent parameters \cite{Whang_JTB_2011}. These models can be particularly useful in comparing the effects
of various prevention, therapy and control programs \cite{Hethcote:1001models:1994,Urszula:SIR:DCDS:2011}.
Since a variety of these programs are available, it is a natural objective to design optimal programs
in terms of some pre-assumed criterion. This calls for the application of optimal control tools \cite{livro_Lenhart_2007}.

Optimal control has a long history of being applied to problems in biomedicine, particularly,
to models for cancer chemotherapy \cite{Eisen:1979,Urszula:2compart:JOTA:2002,Urszula:Angiogenic:SIAM:JCO:2007,%
Urszula:OptSuboptumor:JTB:2008,Urszula:SIR:DCDS:2011,Martin:Teo:1994,Swan:MB:1990,Swierniak:IMACS:1988,Swierniak:JBS:1995}.
But until recently, little attention has been given to models in epidemiology
\cite{Behncke:2000,Gaff:Schaefer:2009,Urszula:SIR:DCDS:2011,MR2719552,MyID:263,MyID:283,MyID:265}.
In this paper we review the application of optimal control to TB mathematical models.
The first paper appeared in 2002 \cite{SLenhart_2002},
and considers a mathematical model for TB based on \cite{Castillo_Chavez_1997}
with two classes of infected and latent individuals (infected with typical and resistant strain TB)
where the aim is to reduce the number of infected and latent individuals with resistant TB.
Two control strategies are proposed to achieve the objective: a \emph{case finding}
control measure, referring to the identification of individuals latently infected with typical TB
and who are at high risk of developing the disease and who may benefit from prevention therapy
(reducing the number of latent individuals that develop the disease) \cite{SLenhart_2002,Reichman:2000};
and a \emph{case holding} control, representing the effort that prevents the failure of the treatment
in the typical TB infectious individuals and referring to activities and techniques used to ensure regularity
of drug intake for a duration adequate to achieve a cure (reducing the incidence of acquired drug-resistant TB)
\cite{Chaulet:1983,SLenhart_2002}. In \cite{Haffat_et_all} the authors consider the problem of minimizing
the number of infectious individuals with a control intervention representing the effort
on the prevention of the exogenous reinfection. The authors of \cite{Okuonghae_2010} propose the implementation
of a \emph{case finding} control, representing the fraction of active infectious individuals that are identified
and will be isolated in a facility, for an effective treatment and prevention of contact with susceptible
and latent individuals, and a control measure based on the medical testing/screening
of new immigrants before they are allowed into the population. In \cite{Whang_JTB_2011} three control
interventions are studied with the aim of reducing the number of latent and active infectious individuals:
\emph{distancing} control, representing the effort of reducing susceptible individuals that become infected,
such as, isolation of infectious individuals or educational campaigns; \emph{case finding} control applied
to latent individuals; and \emph{case holding} control for infectious individuals.
In \cite{Bowong_2010,Bowong_2013} \emph{case finding} and \emph{case holding} control measures are proposed
for the minimization of the number of active infected individuals. In \cite{Emvudu_et_all} the authors propose
optimal control strategies for reducing the number of individuals in the class of \emph{the lost to follow up individuals}.
In \cite{Silva:Torres:Naco:2012,Silva:Torres:MBS:2013}, optimal strategies for the minimization of the number
of active TB infectious and persistent latent individuals are proposed.

The study of optimal control strategies produce valuable theoretical results, which can be used to suggest
or design epidemic control programs. Depending on a chosen goal (or goals), various objective criteria
may be adopted \cite{Bowong_2010}. Although the implementation of the control policies, suggested
by the mathematical analysis, can be difficult, they can be a support for the public health authorities
and simulation of optimal control problems applied to mathematical models may become a powerful tool
in their hands (see \cite{Bowong_2010} and references cited therein).

The manuscript is organized as follows.
In Section~\ref{sec:uncont:tb:models}
mathematical models for TB dynamics are reviewed.
They form, after introduction of the control functions, the control system
of the optimal control problems on TB epidemics under consideration.
The models with controls are presented in Section~\ref{sec:cont:tb:models}.
A general optimal control problem is formulated in Section~\ref{sec:opt:cont:prob},
where we explain how to obtain the analytic expression for the optimal controls,
using the Pontryagin minimum principle \cite{Pontryagin_et_all_1962}.
In Section~\ref{sec:numer:methods:simu} we recall the numerical methods used
to compute the optimal controls and associated dynamics.
The main conclusions, derived from the numerical simulations, are resumed.
Finally, in Section~\ref{sec:example}, an example is given, illustrating the effectiveness
of the implementation of the control strategies on a TB control disease.
We end with Section~\ref{sec:conc} of conclusions and future research.


\section{Uncontrolled TB Models}
\label{sec:uncont:tb:models}

Mathematical models have become important tools in analyzing the spread and control
of infectious diseases \cite{Hethcote:1001models:1994}. In this section we present different
mathematical TB models which are, after some modifications, the control system of optimal control
problems on TB epidemics (see Section~\ref{sec:cont:tb:models}).

In an infectious disease model, the total population is divided into epidemiological subclasses.
Some of the standard classes are: susceptible individuals ($S$), latently infected individuals
(infected but not infectious) ($E$), infectious ($I$), and the recovered and cured individuals ($R$).
Eight possible compartmental models, described by their flow patterns,
are: $SI$, $SIS$, $SEI$, $SEIS$, $SIR$, $SIRS$, $SEIR$ and $SEIRS$. For example, in a $SEIRS$ model,
susceptible become exposed in the latent period, then infectious, then recovered with temporary
immunity and then susceptible again when the immunity wears off \cite{Hethcote:1001models:1994}.
Here, we choose to denote the class of latently infected individuals by $L$
and the class of recovered and cured individuals by $T$.

In \cite{Castillo_Chavez_1997} the authors present a $SEIRS$ model for TB.
The latently infected and infectious individuals with typical TB are denoted
by $L_1$ and $I_1$, respectively. The model is given by
\begin{equation}
\label{mod:simple:Castillo:1997}
\begin{cases}
\dot{S}(t) = \Lambda - \beta c S(t) \frac{I_1(t)}{N(t)} - \mu S(t) ,\\[0.2 cm]
\dot{L}_1(t) = \beta c S(t) \frac{I_1(t)}{N(t)} - (\mu + k_1 + r_1)L_1(t)
+ \sigma \beta c T(t)\frac{I_1(t)}{N(t)},\\[0.2 cm]
\dot{I}_1(t) = k_1 L_1(t) - (\mu + r_2 +d_1 )I_1(t), \\[0.2 cm]
\dot{T}(t) = r_1 L_1(t) + r_2 I_1(t) - \sigma \beta c T(t) \frac{I_1(t)}{N(t)} - \mu T(t) ,
\end{cases}
\end{equation}
where $N$ denotes the total population, $N(t) = S(t) + L_1(t) + I_1(t) + T(t)$,
$\Lambda$ is the recruitment rate, $\beta$ and $\sigma \beta$ are the probabilities
that susceptible and treated individuals become infected by one infectious individual
$I_1$ per contact per unit of time, respectively, $c$ is the per-capita contact rate,
$\mu$ is the per-capita natural death rate, $k_1$ is the rate at which an individual
leaves the latent class $L_1$ by becoming infectious, $d_1$ is the per-capita TB
induced death rate, and $r_1$ and $r_2$ are per-capita treatment rates
for latent and infectious individuals, respectively. It is assumed that an individual
can be infected only through contacts with infectious individuals.

In the same paper \cite{Castillo_Chavez_1997}, a two-strain model is presented
which considers resistant TB strain. Two subclasses of the total population are added:
$L_2$ (latent) and $I_2$ (infectious), representing the developmental stages
of resistant strains. It is assumed that $I_2$ individuals can infect $S$,
$L_1$ and $T$ individuals. The model is given by the following system:
\begin{equation}
\label{mod:twostrain:Castillo:1997}
\begin{cases}
\dot{S}(t) = \Lambda - \beta c S(t) \frac{I_1(t)}{N(t)}
- \mu S(t) - \beta^* cS(t)\frac{I_2(t)}{N(t)} ,\\[0.2 cm]
\dot{L}_1(t) = \beta c S(t) \frac{I_1(t)}{N(t)} - (\mu + k_1 + r_1)L_1(t)
+ \sigma \beta c T(t)\frac{I_1(t)}{N(t)} + p r_2 I_1(t)\\
\qquad \quad - \beta^* c L_1(t) \frac{I_2(t)}{N(t)},\\[0.2 cm]
\dot{I}_1(t) = k_1 L_1(t) - (\mu + r_2 +d_1 )I_1(t),\\[0.2 cm]
\dot{L}_2(t) = q r_2 I_1(t) - (\mu + k_2)L_2(t) + \beta^* c
\left( S(t) + L_1(t) + T(t)\right)\frac{I_2(t)}{N(t)},\\[0.2 cm]
\dot{I}_2(t) = k_2 L_2(t) - (\mu + d_2)I_2(t),\\[0.2 cm]
\dot{T}(t) = r_1 L_1(t) + (1 - p -q)r_2 I_1(t) - \sigma \beta c T(t)
\frac{I_1(t)}{N(t)} - \mu T(t) - \beta^* c T(t) \frac{I_2(t)}{N},
\end{cases}
\end{equation}
with $N(t) = S(t) + L_1(t) + I_1(t) + L_2(t) + I_2(t) + T(t)$ and where $\beta^*$
is the probability that treated individuals become infected by one resistant-TB
infectious individual $I_2$ per contact per unit of time, $d_2$ and $k_2$
have similar meanings as $d_1$ and $k_1$ for resistant-TB, and $p + q$
is the proportion of those treated infectious individuals who did not
complete their treatment. The proportion $p$ modifies the rate that departs
from the latent class, and $q r_2 I_1(t)$ gives the rate at which individuals
develop resistant-TB due to an incomplete treatment of active TB.
Therefore, $p \geq 0$, $q \geq 0$ and $p + q \leq 1$.

The results of \cite{Castillo_Chavez_2000} suggest that exogenous reinfection
has a drastic effect on the qualitative dynamics of TB. If we introduce into model
\eqref{mod:simple:Castillo:1997} the term $\rho \beta c L_1 I_1/N$,
which represents exogenous reinfection, we obtain the exogenous reinfection
tuberculosis model developed in \cite{Castillo_Chavez_2000}.
The parameter $\rho$ represents the level of reinfection.
A value of $\rho \in (0, 1)$ implies that reinfection is less likely
than a new infection. In fact, a value of $\rho \in (0, 1)$ implies that
a primary infection provides some degree of cross immunity to exogenous reinfections.
A value of $\rho \in (1, \infty)$ implies that TB infection increases the likelihood
of active TB. The authors take the conservative view that $0 < p < 1$
(see \eqref{mod:Haffat:reinf:2009} in Section~\ref{sec:cont:tb:models}
for the model with controls).

In \cite{Okuonghae_2010} a mathematical model is presented, which takes
into account immigration of infectious individuals as well as isolation of the
infectious individuals for treatment. The model without controls is an extension
of that of \cite{Castillo_Chavez_2000}: one subclass of the total population,
the class of isolated infectious individuals with typical TB, is added.
The corresponding controlled model is given in Section~\ref{sec:cont:tb:models},
by \eqref{mod:Okuonghae:2010}.

In \cite{Bowong_2010,Bowong_2013,Emvudu_et_all} fast and slow progression
to the infectious class are considered and both models consider exogenous reinfection,
chemoprophylaxis of latently infected individuals and treatment of active infected individuals.
In \cite{Bowong_2013} a $SEI$ model is proposed, where the infective class is divided
into two subclasses: diagnosed infectious (those who have an active TB confirmed after an examination in a hospital)
and undiagnosed infectious (i.e., those who have an active TB but not confirmed by an examination in a hospital),
denoted by $I_1$ and $I_2$, respectively. The model in \cite{Bowong_2013} is given by the following system
of ordinary differential equations:
\begin{equation}
\label{mod:Bowong:2013}
\begin{cases}
\dot{S} = \Lambda - \beta \frac{I_1}{N}S - \mu S\, , \\[0.2 cm ]
\dot{L}_1 = (1 - g)\beta \frac{I_1}{N}S + r_2 I_1
+ r_3 I_2 -(1- r_1)\sigma \lambda L_1 -[\mu + k_1(1-r_1)]L_1\, ,\\[0.2 cm]
\dot{I}_1 = g f \beta \frac{I_1}{N}S + h(1-r_1)(k_1+ \sigma \beta
\frac{I_1}{N})L_1 -(\mu + d_1 + r_2)I_1 \, ,\\[0.2 cm]
\dot{I}_2 = g(1-f)\lambda S + (1-h)(1-r_1)(k+\sigma \lambda)E - (\mu + d_3 + r_3)J  \, ,
\end{cases}
\end{equation}
where the fraction $g$ of newly infected individuals are assumed to undergo
a fast progression directly to  TB, while the remainder is latently infected
and enter the latent class $L_1$. Among the newly infected individuals
that undergo a fast progression to TB, a fraction $f$ of them is detected,
and will enter the diagnosed infectious class $I_1$, while the remaining $1-f$
is undetected and will be transferred into the undiagnosed infectious class $I_2$.
In this model $r_2$ is the rate of effective per capita therapy of diagnosed
infectious individuals $I_1$.   It is assumed that undiagnosed infectious
individuals can naturally recover and will be transferred into the latent
class $L_1$ at a constant rate $r_3 < r_2$. Here $\sigma$ is the factor reducing
the risk of infection as a result of acquiring immunity for latently infected
individuals $L_1$. Among latently infected individuals who become infectious,
the fraction $h$ of them is diagnosed and treated, while the remaining $1-h$
is not diagnosed and enters the undiagnosed infectious class $I_2$.
The parameter $d_3$ is the per capita TB induced death rate
for undiagnosed infectious individuals.
If we consider $f=1$, $h=1$, $r_3 = 0$ and $d_3 =0$,
then we obtain the model proposed in \cite{Bowong_2010}.

In \cite{Gomes_etall_2007} the authors present a model for TB that considers
exogenous reinfection and post-exposure interventions. The class $L_3$ denotes
the fraction of early latent individuals, that is, individuals that were recently
infected (less than two years) and are not yet infectious; while $L_4$ denotes
the class of persistent latent individuals who where infected and remain latent.
The other classes are $S$, $I_1$ and $T$, with the same meaning has in the previous models.
The model of \cite{Gomes_etall_2007} is given by the following system:
\begin{equation}
\label{modelGab}
\begin{cases}
\dot{S}(t) = \mu N - \frac{\beta}{N} I_1(t) S(t) - \mu S(t),\\
\dot{L_3}(t) = \frac{\beta}{N} I_1(t)\left( S(t) + \sigma L_4(t)
+ \sigma_R T(t)\right) - (\delta + \tau_1 + \mu)L_3(t),\\
\dot{I}_1(t) = k_1 \delta L_3(t) + \omega L_4(t) + \omega_R T(t)
- (\tau_0  + \mu) I_1(t),\\
\dot{L_4}(t) = (1 - k_1) \delta L_3(t) - \sigma \frac{\beta}{N} I_1(t) L_4(t)
- (\omega + \tau_2 + \mu)L_4(t),\\
\dot{T}(t) = \tau_0 I_1(t) +  \tau_1 L_3(t)
+ \tau_2 L_4(t)
- \sigma_R \frac{\beta}{N} I_1(t) T(t) - \left(\omega_R + \mu\right) T(t) \, .
\end{cases}
\end{equation}
Here $\sigma$ has the same meaning has in the model \eqref{mod:Bowong:2013}
but applies to persistent latent individuals, $L_4$, and $\sigma_R$
represents the same parameter factor but for treated patients;
$\delta$ denotes the rate at which individuals leave the $L_3$ compartment;
$\omega$ is the rate of endogenous reactivation
for persistent latent infections (untreated latent infections);
$\omega_R$ is the rate of endogenous reactivation for treated individuals
(for those who have undergone a therapeutic intervention);
$\tau_0$ is the rate of recovery under treatment of active TB
(assuming an average duration of infectiousness of six months);
$\tau_1$ and $\tau_2$ apply to latent individuals $L_3$ and $L_4$,
respectively, and are the rates at which chemotherapy or a post-exposure
vacine is applied. In this model it is assumed that the total population is constant,
i.e., the rate of birth and death, $\mu$, are equal and there are no disease-related deaths.


\section{Controlled TB Models}
\label{sec:cont:tb:models}

The model \eqref{mod:twostrain:Castillo:1997} is the basis of the work developed
in \cite{SLenhart_2002}, where two control functions, $u_1$ and $u_2$, are introduced,
representing control strategies for the two-strain TB model.
The control system is given by
\begin{equation}
\label{mod:twostrain:SusLenhart:2002}
\begin{cases}
\dot{S}(t) = \Lambda - \beta c S(t) \frac{I_1(t)}{N(t)} - \mu S(t)
- \beta^* cS(t)\frac{I_2(t)}{N(t)} ,\\[0.2 cm]
\dot{L}_1(t) = \beta c S(t) \frac{I_1(t)}{N(t)}
- \left(\mu + k_1 + u_1(t)r_1\right)L_1(t)
+ \sigma \beta c T(t)\frac{I_1(t)}{N(t)}\\
\qquad\quad + (1-u_2(t))p r_2 I_1(t)
- \beta^* c L_1(t) \frac{I_2(t)}{N(t)},\\[0.2 cm]
\dot{I}_1(t) = k_1 L_1(t) - (\mu + r_2 +d_1 )I_1(t),\\[0.2 cm]
\dot{L}_2(t) = (1-u_2(t))q r_2 I_1(t) - (\mu + k_2)L_2(t)
+ \beta^* c \left( S(t) + L_1(t) + T(t)\right)\frac{I_2(t)}{N(t)},\\[0.2 cm]
\dot{I}_2(t) = k_2 L_2(t) - (\mu + d_2)I_2(t),\\[0.2 cm]
\dot{T}(t) = u_1(t)r_1 L_1(t) + \left(1 -((1-u_2(t)))(p +q)\right)r_2 I_1(t)
- \sigma \beta c T(t) \frac{I_1(t)}{N(t)}\\
\qquad\quad - \mu T(t) - \beta^* c T(t) \frac{I_2(t)}{N(t)} \, .
\end{cases}
\end{equation}
The control $u_1$ represents the fraction of typical TB latent individuals, $L_1$,
that is identified and put under treatment (to reduce the number of individuals
that may be infectious). The coefficient $1-u_2(t)$ represents the effort that prevents
the failure of the treatment in the typical TB infectious individuals (to reduce
the number of individuals developing resistant TB). When the control $u_2$ is near 1,
there is low treatment failure and high implementation costs.

In \cite{Haffat_et_all} the authors consider the exogenous reinfection TB model presented
in \cite{Castillo_Chavez_2000} and introduce a control which simulates the effect
of exogenous reinfection, that is, they consider a fixed value for $\rho$, $\rho =0.4$,
and multiply the term $\rho \beta c L_1 I_1/N$ by $1-u$. The coefficient $1 - u$
represents the effort that prevents the exogenous reinfection in order to reduce
the contact between the infectious and exposed individuals, thus decreasing the number
of infectious individuals. The exogenous reinfection TB model
with control, proposed in \cite{Haffat_et_all}, is given by
\begin{equation}
\label{mod:Haffat:reinf:2009}
\begin{cases}
\dot{S}(t) = \Lambda - \beta c S(t) \frac{I_1(t)}{N(t)} - \mu S(t) ,\\[0.2 cm]
\dot{L}_1(t) = \beta c S(t) \frac{I_1(t)}{N(t)} - p \beta c (1-u(t))
L_1(t)\frac{I_1(t)}{N(t)} - (\mu + k_1)L_1(t)
+ \sigma \beta c T(t) \frac{I_1(t)}{N(t)},\\[0.2 cm]
\dot{I}_1(t) = p \beta c (1 -u(t))L_1(t) \frac{I_1(t)}{N(t)}
+ k_1 L_1(t) - (\mu + r_2 +d_1 )I_1(t),\\[0.2 cm]
\dot{T}(t) = r_2 I_1(t) - \sigma \beta c T(t) \frac{I_1(t)}{N(t)} - \mu T(t) ,
\end{cases}
\end{equation}
with $N(t) = S(t) + L_1(t) + I_1(t) + T(t)$.

In \cite{Okuonghae_2010} the model takes into account immigration of infectious individuals
as well as isolation of the infectious for treatment. Two control functions
are considered: $u_1$ and $u_2$. The control $u_1$ accounts for medical
testing/screening of new immigrants, before they are allowed into
the population, while the coefficient $1 - u_1$ is the effort that sustains such
a testing policy. The control $u_2$ is a \emph{case finding} control that represents
the fraction of active individuals that are identified and will be isolated in a
special facility, like a hospital, for effective treatment and prevention of contacts
with susceptible and latent individuals. Hence, the term $1 + u_2$ represents the effort
that sustains the isolation policy. The model with controls is given by
\begin{equation}
\label{mod:Okuonghae:2010}
\begin{cases}
\dot{S} = \Lambda^* + (1 - (1-u_1(t))(p^*+q^*))A - \beta c S \frac{I_1+lJ}{N}-\mu S,\\[0.2 cm]
\dot{L}_1 = (1-u_1(t))p^* A + (1-m)\beta c S \frac{I_1+lJ}{N} -p\beta c L_1 \frac{I_1+lJ}{N}
+ \sigma \beta c T \frac{I_1+\sigma J}{N}\\
\qquad\quad -(k_1 + \mu) L_1, \\[0.2 cm]
\dot{I}_1 = (1-u_1(t))q^* A + m \beta c S \frac{I_1+lJ}{N} + p\beta c L_1
\frac{I_1+lJ}{N} + k_1 L_1 -(\mu+d_3+r_2)I_1 \\
\qquad\quad - (1+u_2(t)) \xi I_1, \\[0.2 cm]
\dot{J} = (1+u_2(t)) \xi I_1 -(r_3+\mu+d_4)J,\\[0.2 cm]
\dot{T} = r_2 I_1 + r_3 J -\sigma \beta c T\frac{I_1+\sigma J}{N} - \mu T .
\end{cases}
\end{equation}
The constant $A$ represents the number of new members arriving into the population,
per unit of time; $p^*$ is the fraction of $A$ arriving infected with latent TB;
and $q^*$ is the fraction of $A$ arriving infected with active TB, so that
$0 \leq p^* + q^* \leq 1$. It is assumed that $1-(p^*-q^*)A$ individuals
are free from the disease. The parameter $\Lambda^*$ is the recruitment rate.
Here the population is replenished from births and immigration; $d_3$ and $d_4$
are the typical TB-induced mortality rates for active TB individuals, that were
not isolated from the population, and for isolated TB cases, respectively;
$r_3$ is the treatment rate for isolated infectious individuals. The parameter
$l$ is the isolation level and lies in the range $0 \leq l \leq 1$, where $l=0$
indicates absolute isolation for active infectious TB cases and $l=1$ indicates
no effective isolation. The parameter $0 \leq \sigma^* \leq 1$ determines
the level of contact that treated individuals have with isolated individuals.
The authors assume that $\sigma^* < l$ and that the treated individuals have
a reduced contact with the isolated infectious group, as some of the treated
individuals are from the $J$ class. By $m$, $0 < m < 1$, it is denoted the fraction
of persons with new infections who develop to TB fast, per unit of time,
while $\xi$ is the rate of isolation. The parameters $\mu$, $\beta$, $c$,
$\sigma \beta$, $k_1$, $p$, $\sigma$ and $r_2$, have the same meaning
as in the previous models (see Table~\ref{table:parameters}).
\begin{table}[!htb]
\centering
\begin{tabular}{|l | l |}
\hline
{\small{Symbol}} & {\small{Description}}  \\
\hline
{\small{$\Lambda$}} & {\small{Recruitment rate}}\\
{\small{$\mu$}} & {\small{Per-capita natural death rate}}\\
{\small{$b$}} & {\small{Effective birth rate}}\\
{\small{$d_1$}} & {\small{Per-capita typical TB induced death rate}}\\
{\small{$d_2$}} & {\small{Per-capita resistant TB induced death rate}}\\
{\small{$\beta$}} & {\small{Rate at which susceptible individuals become infected by an infectious individual}}\\
\ \  & {\small{with typical TB}}\\
{\small{$\beta^*$}} & {\small{Rate at which susceptible individuals become infected by one resistant-TB}}\\
\ \  & {\small{infectious individual}}\\
{\small{$\sigma \beta$}} & {\small{Rate at which treated individuals become infected by an infectious individual}}\\
\ \  & {\small{with typical TB}}\\
{\small{$c$}} & {\small{Per-capita contact rate}}\\
{\small{$k_1$}} & {\small{Rate of progression to active TB}}\\
{\small{$k_2$}} & {\small{Rate of progression to active resistant TB}}\\
{\small{$r$}} & {\small{Per-capita treatment rate}}\\
{\small{$r_1$}} & {\small{Treatment rate of individuals with latent typical TB}}\\
{\small{$r_2$}} & {\small{Treatment rate of individuals with infectious typical TB}}\\
{\small{$r_3$}} & {\small{Treatment rate of undiagnosed infectious individuals}}\\
{\small{$1-s$}} & {\small{Treatment success rate}}\\
{\small{$p$}} & {\small{Level of exogenous reinfection}}\\
{\small{$u + v$}} & {\small{Proportion of treated infectious individuals who did not complete their treatment}}\\
{\small{$g$}} & {\small{Fraction of newly infected individuals that undergo a fast progression}}\\
\ \  & {\small{to the infectious class}}\\
{\small{$f$}} & {\small{Fraction of newly infected individuals that undergo a fast progression to TB}}\\
{\small{$h$}} & {\small{Fraction of infectious individuals that are diagnosed and treated}}\\
{\small{$\sigma$}} & {\small{Factor reducing the risk of infection as a result of acquiring immunity}}\\
\ \  & {\small{for latently infected individuals}}\\
{\small{$\sigma_R$}} & {\small{Factor reducing the risk of infection as a result of acquiring immunity}}\\
\ \  & {\small{for treated individuals}}\\
{\small{$\delta$}} & {\small{Rate at which individuals leave $L_3$ compartment}}\\
{\small{$\alpha$}} & {\small{Non-progress rate from $L_1$ to $I$}}\\
{\small{$\omega$}} & {\small{Rate of endogenous reactivation for persistent latent infections}}\\
{\small{$\omega_R$}} & {\small{Rate of endogenous reactivation for treated infections}}\\
{\small{$\tau_0$}} & {\small{Rate of recovery under treatment of active TB}}\\
{\small{$\tau_1$}} & {\small{Rate of recovery under treatment of latent individuals $L_3$}}\\
{\small{$\tau_2$}} & {\small{Rate of recovery under treatment of latent individuals $L_4$}}\\
{\small{$N$}} & {\small{Total population}}\\
\hline
\end{tabular}
\caption{Parameters that are used in the mathematical models for TB transmission (with and without controls).}
\label{table:parameters}
\end{table}

In \cite{Whang_JTB_2011} the authors modified a model from \cite{Aparicio:JTB:2002}
in order to study the transmission dynamics for TB in South Korea
in the forty years period from 1970 to 2009.
The total population, $N$, is divided into susceptible individuals ($S$), hight-risk latent
($L_1$) that are recently infected but not infectious, active-TB infectious ($I$)
and permanently latent ($L_5$) with low risk. The main difference from the other
TB models is the incorporation of time-dependent parameters. The birth and mortality rates
are assumed as the time-dependent functions $b(t)$ and $\mu(t)$, respectively.
The time-dependent function $k(t)$ is the per-capita rate of progression to active-TB
from the recently latent class $L_1$. Individuals who do not progress from the class
$L_1$ to the class $I$ and those who are treated in the class $L_1$, are moved to the class
$L_5$ at the per-capita rate $\alpha$ and $r(t)$, respectively. The time-dependent
function $s(t)$ is the proportion of treated infectious individuals who did not complete
their treatment; $1 - s(t)$ is the treatment success rate for active tuberculosis.
As previously, the parameter $\beta$ is the number of new infections with active-TB
per unit of time. The authors propose optimal control treatment strategies of TB
in South Korea, for the period from 2010 to 2030, for various possible scenarios.
Since it is not feasible to have the mortality data or the total population data for the future,
the authors used the averaged constant values from the year 2001 to 2009 instead
of using $b(t)$, $\mu(t)$, $s(t)$ and $r(t)$. The estimated time-dependent $k(t)$
from the year 1970 to 2009 is, however, used to find the optimal treatment strategy
for the future. Three time-dependent controls are introduced into the TB system.
The control $u_1(t)$ is the \emph{distancing control} and the coefficient $1-u_1(t)$
represents the effort of reducing susceptible individuals that become infected
by infectious individuals, such as isolation of infectious people or educational
programs/campaigns for healthy control. The \emph{case finding} control, $u_2(t)$,
represents the effort of decreasing the number of individuals that may be infectious,
such as identification through screening of latent individuals who are in high risk
of developing TB and who may benefit from prevention intervention. The \emph{case holding}
control, $1-u_3(t)$, represents the effort of reducing the reinfection individuals,
such as taking care of patients until they complete their treatment.
The control system is given by
\begin{equation*}
\begin{cases}
\dot{S}(t) = b N(t) - \mu S(t) - (1 - u_1(t))
\beta \frac{S(t)}{N(t)}I(t) \, ,\\[0.2 cm]
\dot{L}_1(t) = (1 - u_1(t)) \beta \frac{S(t)}{N(t)}I(t)
- \left(k(t) + u_2(t)\alpha + \mu\right)L_1(t)
+ (1-u_3(t))s r I(t),\\[0.2 cm]
\dot{I}(t) = k(t)L_1(t) - (r + \mu)I(t)\, ,\\[0.2 cm]
\dot{L}_5(t) = (1-(1-u_3(t))s)rI(t) + u_2(t)\alpha L_1(t) - \mu I(t) \, ,
\end{cases}
\end{equation*}
where $N(t) = S(t) + L_1(t) + I(t) + L_5(t)$.

In \cite{Bowong_2010} the author formulates an optimal control problem where one control,
$u$, is introduced in the TB model. The control represents the effort on the chemoprophylaxis
parameter ($r_1$) of latently infected individuals to reduce the number of individuals
that may develop active TB. The model with control is given by \eqref{mod:Bowong:2013}
with $1-u_1 r_1$ instead of $1-r_1$ and considering $f=h=1$ and $d_2=r_3=0$.
In \cite{Bowong_2013}, additionally to the control $u_1$, a second control $u_2$
is included in the model \eqref{mod:Bowong:2013}, which represents the effort
on detection ($h$) of infectious, to increase the treatment rate of infectious and,
consequently, to reduce the number of infectious and the source of infection.
The model with controls is given by \eqref{mod:Bowong:2013} with $1-u_1 r_1$
instead of $1-r_1$ and $u_2 h$ instead of $h$.

In \cite{Emvudu_et_all} the authors propose a model adapted to Africa, in particular
to Cameroon. A new class of individuals, called \emph{the lost to follow up individuals},
is introduced. The individuals in this class are active infectious individuals who didn't
take the treatment until the end, due to a brief relief of a long time treatment.
Some of the lost to follow up individuals can transmit the disease without presenting
any symptom. The authors present control measures for the reduction of the number
of individuals that progress to the class of the lost to follow up individuals, $L$.

In \cite{Silva:Torres:Naco:2012,Silva:Torres:MBS:2013} two control functions, $u_1$ and $u_2$,
and two real positive constants, $\epsilon_1$ and $\epsilon_2$, were introduced
in the model \eqref{modelGab}. The control $u_1$ represents the effort in preventing
the failure of treatment in active TB infectious individuals $I_1$ (\emph{case holding}),
and the control $u_2$ governs the fraction of persistent latent individuals $L_4$ that is
put under treatment (\emph{case finding}). The parameters $\epsilon_i \in (0, 1)$, $i=1, 2$,
measure the effectiveness of the controls $u_i$, $i=1, 2$, respectively, \textrm{i.e.},
these parameters measure the efficacy of treatment interventions for active and persistent
latent TB individuals, respectively. In \cite{Silva:Torres:Naco:2012} the model is applied to Angola.

In \cite{SLenhart_2002,Silva:Torres:Naco:2012,Silva:Torres:MBS:2013} it is assumed that
the total population $N$ is constant, that is, the recruitment rate is equal to $\mu N$,
$\Lambda = \mu N$, and the TB induced death rates are equal to zero. In
\cite{Bowong_2010,Bowong_2013,Emvudu_et_all,Haffat_et_all,Okuonghae_2010,Whang_JTB_2011}
the total population is not considered to be constant.


\section{Optimal Control Problems}
\label{sec:opt:cont:prob}

The control strategies for the reduction of infectious and/or latent individuals
imply a cost of implementation. This implementation cost depends on many factors,
for example, costs for activities to facilitate \emph{case holding}. Those activities
can be challenging because of the fact that chemotherapy must be maintained for several
months to ensure a lasting cure, but patients usually recover their sense of
well-being after only a few weeks of treatment and may often stop taking medications
\cite{SLenhart_2002,Reichman:2000}. For \emph{case finding}, the control policies
consider actions for the prevention of disease development with preventive therapy
of latently infected individuals, which can be done in different ways, for example,
identifying TB cases where the first initiative patient/provider contact is taken
by health providers (\emph{active case finding}) or by the patient
(\emph{passive case finding}), and screening activities among population groups
at high risk of TB (for example, immigrants from high prevalence countries)
\cite{SLenhart_2002,Okuonghae_2010}. The implementation cost is taken
into account in the formulation of an optimal control problem and is mathematically
traduced by a functional.

Let $L$ and $I$ denote the latent infected and infectious individuals, respectively,
without any specific characteristic, and $u = (u_1, \ldots, u_n)$, with $n \in \{ 1, 2, 3\}$
for the models described in Section~\ref{sec:cont:tb:models}, be the bounded Lebesgue
measurable control function. Different cost functionals have been considered
on the previously cited works on optimal control applied to TB models:
\begin{equation*}
C_1(u) = \int_0^{t_f} \left[A_1 I(t) + A_2 L(t) + \sum_{i=1}^n\frac{B_i}{2}u_i^2(t) \right]  dt \, ,
\end{equation*}
\begin{equation*}
C_2(u) = \int_0^{t_f} \left[ A_1 I(t) + \sum_{i=1}^n\frac{B_i}{2}u_i^2(t) \right]  dt \, ,
\end{equation*}
and
\begin{equation*}
C_3(u) = \int_0^{t_f} \left[ A_2 L(t) + \sum_{i=1}^n\frac{B_i}{2}u_i^2(t) \right]  dt \, .
\end{equation*}
It is assumed that the cost of the treatments are nonlinear and take a quadratic form.
The coefficients, $A_j$, $j \in \{1, 2 \}$, and $B_i$, $i \in \{1, 2, 3 \}$,
are balancing cost factors due to the size and importance
of the three parts of the objective functional.

For the cost functional $C_2$ and $C_3$, the aim is to minimize the infectious
and latent individuals, respectively, while keeping the cost low. For the cost
functional $C_1$, both infectious and latent individuals are wished to be minimized,
keeping the cost of control interventions low.

A cost functional of type $C_1$ is adopted by
\cite{SLenhart_2002,Silva:Torres:Naco:2012,Silva:Torres:MBS:2013,Whang_JTB_2011},
$C_2$ is chosen in \cite{Bowong_2010,Bowong_2013,Haffat_et_all,Okuonghae_2010}
and $C_3$ is the objective functional in \cite{Emvudu_et_all}.

Let $(\mathcal{S})$ denote a mathematical model for TB with controls
(see Section~\ref{sec:cont:tb:models}) given by a finite number, $m$, of differential equations.
Assume that the control system $(\mathcal{S})$ is given by $\dot{X}= f(X, u)$,
where $f$ is a Lipschitz continuous function with respect to the state variable $X$,
$X \in \mathbb{R}^m$, on the time interval $[0, t_f]$ and $X(0) = X_0$
be the initial condition. Moreover, let $g(X, u)$ denote the integrand
of the cost functional $C$ under consideration
and assume that $g$ is convex with respect to the control $u$.
The optimal control problem consists in finding a control $u^*$
such that the associated state trajectory $X^*$ is solution of the control
system $(\mathcal{S})$, in the time interval $[0, t_f]$ with initial
conditions $X^*(0)$, and minimizes the cost functional $C$,
\begin{equation}
\label{eq:cost:min}
C(u^*) = \min_{\Omega} C(u) \, ,
\end{equation}
where $\Omega$ is the set of admissible controls
(bounded and Lebesgue integrable functions) given by
\begin{equation*}
\Omega = \left\{u \in L^1(0, t_f) \, | \, 0 \leq u_i \leq 1 \, , \, \, i=1, \ldots, n  \right\}.
\end{equation*}

According to the Pontryagin minimum principle \cite{Pontryagin_et_all_1962},
if $u^*(\cdot) \in \Omega$ is optimal for the optimization problem
\eqref{eq:cost:min} subject to the control system $(\mathcal{S})$
with fixed initial conditions $X_0$ and fixed final time $t_f$,
then there exists a nontrivial absolutely continuous mapping
$\lambda : [0, t_f] \to \mathbb{R}^m$,
called the \emph{adjoint vector}, such that
\begin{equation}
\label{control:system:OCP}
\dot{X} = \frac{\partial H}{\partial \lambda}
\end{equation}
and
\begin{equation}
\label{adjointsystem:OCP}
\dot{\lambda}= -\frac{\partial H}{\partial X} \, ,
\end{equation}
where the function $H$ defined by
\begin{equation*}
H= H(X(t), \lambda(t), u(t)) =  g\left(X(t), u(t)\right)
+ \langle \lambda(t), f(X(t), u(t) \rangle
\end{equation*}
is called the \emph{Hamiltonian}, and the minimization condition
\begin{equation}
\label{maxcondPMP}
H(X^*(t),\lambda^*(t), u^*(t)) = \min_{0 \leq u \leq 1}
H(X^*(t), \lambda^*(t), u)
\end{equation}
holds almost everywhere on $[0, t_f]$.
Moreover, the transversality conditions
\begin{equation}
\label{transversality:cond:OCP}
\lambda_i(t_f) = 0, \quad
i =1,\ldots, m \, ,
\end{equation}
hold. This approach was considered in
\cite{Bowong_2013,Emvudu_et_all,Haffat_et_all,SLenhart_2002,Okuonghae_2010,Silva:Torres:Naco:2012,Whang_JTB_2011}
for obtaining an analytic expression of the optimal control $u^*$. In \cite{Bowong_2010} the analytical expression
of the optimal control $u^*$ is derived, using an algebraic approach, by solving a Riccati equation.


\section{Numerical Methods and Simulations}
\label{sec:numer:methods:simu}

In \cite{SLenhart_2002} the optimal treatment strategy is obtained by solving the optimality system,
consisting of 12 ODEs from \eqref{mod:twostrain:SusLenhart:2002} and adjoint equations
\eqref{adjointsystem:OCP}. An iterative method is used for solving the optimality system.
The authors start to solve the state equations with a guess for the controls over the simulated
time using a forward fourth order Runge--Kutta scheme. Because of the transversality conditions
\eqref{transversality:cond:OCP}, the adjoint equation are solved by a backward fourth order
Runge--Kutta scheme using the current iteration solution of the state equations. Then, the controls
are updated by using a convex combination of the previous controls and the value from the
characterizations derived by \eqref{maxcondPMP}. This process is repeated and iteration
is stopped if the values of unknowns at the previous iteration are close enough
to the ones at the present iteration. The same numerical procedure is applied in
\cite{Emvudu_et_all,Okuonghae_2010,Whang_JTB_2011}.
In \cite{Silva:Torres:Naco:2012,Silva:Torres:MBS:2013} the authors use also
the software IPOPT \cite{IPOPT}, the Matlab Optimal Control Software
PROPT \cite{PROPT} and the algebraic modeling language AMPL \cite{AMPL}.
See, for example, \cite{Anita:book:OC:Matlab:2011} for details on numerical simulations
of optimal control applied to life sciences using Matlab. In \cite{Haffat_et_all} the authors
apply a semi-implicit finite diference method developed by \cite{Gumel_el_all_2001}
and presented in \cite{Karrakchou_el_all_2006}.
For a gentle overview see \cite{MyID:287}.

In \cite{SLenhart_2002} different optimal control strategies are presented,
which depend on the population size, cost of implementing treatment controls
and the control parameters. The authors conclude that programs that follow
the proposed control strategies can effectively reduce the number of latent
and infectious resistant-strain TB cases. In \cite{Haffat_et_all} the numerical
results show the effectiveness to introduce the control that prevents the exogenous
reinfection, which reactivates the bacterium tuberculosis at the latent individuals.
Analogously, in \cite{Bowong_2010,Bowong_2013} the results emphasize the importance
of controlling exogenous reinfection using chemoprophylaxis and detection methods
in reducing the number of actively infected individuals with tuberculosis.
The numerical simulations in \cite{Okuonghae_2010} show that the proposed control
interventions can effectively reduce the number of latent and infectious TB cases.
More precisely, the optimal control results show that a cost effective combination
of screening/medical testing of immigrants, as well as isolation of infectious persons
for treatment, may depend on cost of implementation of the controls and the parameters
of the model, specially, the rate of isolation $\xi$, isolation level $l$, fraction
of immigrants with latent TB $p$, and fraction of immigrants with active TB $q$.


\section{Example: Optimal Control for the TB SEIRS Model}
\label{sec:example}

In this section we introduce a \emph{case finding} control function $u$ to the SEIRS mathematical
model for TB \eqref{mod:simple:Castillo:1997} from \cite{Castillo_Chavez_1997}.
The coefficient $1-u(t)$ represents the effort that sustains the success of the treatment
of latent individuals $L_1$. We assume that the total population $N$ is constant, that is,
$d_1 = 0$. This assumption is appropriate when the time period is short or when the natural
deaths or the immigration balances the emigration (see \cite{Hethcote:1001models:1994}).

The controlled model is given by (see Table~\ref{table:parameters} for the meaning of the parameters)
\begin{equation}
\label{mod:simple:Castillo:1997:control}
\begin{cases}
\dot{S}(t) = \Lambda - \frac{\beta}{N} c S(t) I_1(t) - \mu S(t) ,\\[0.2 cm]
\dot{L}_1(t) = \frac{\beta}{N} c S(t) I_1(t) - (\mu + r_1) L_1(t) - (1-u(t))k_1 L_1(t)
+ \sigma \frac{\beta}{N} c T(t) I_1(t),\\[0.2 cm]
\dot{I}_1(t) = (1-u(t))k_1 L_1(t) - (\mu + r_2 +d_1 )I_1(t), \\[0.2 cm]
\dot{T}(t) = r_1 L_1(t) + r_2 I_1(t) - \sigma \frac{\beta}{N} c T(t) I_1(t) - \mu T(t) .
\end{cases}
\end{equation}
Our aim is to minimize the number of infectious individuals $I_1$, while keeping the cost
of control strategies implementation low, that is, (we choose a cost functional
of type $C_2$ of Section~\ref{sec:opt:cont:prob})
\begin{equation}
\label{cost:example}
C(u) = \int_0^{t_f} \left[ A I_1(t) + \frac{B}{2}u^2(t) \right]  dt \, .
\end{equation}
In this example we propose to solve the optimal control problem that consists
in finding a control $u^*$ such that the associated state trajectory
$(S^*, L_1^*, I_1^*, T^*)$ is solution of the control system
\eqref{mod:simple:Castillo:1997:control} in the time interval $[0, t_f]$
with initial conditions $(S(0), L_1(0), I_1(0), T(0))$
and minimize the cost functional $C$,
\begin{equation}
\label{eq:cost:min:example}
C(u^*) = \min_{\Omega} C(u) \, ,
\end{equation}
where $\Omega$ is the set of admissible controls given by
\begin{equation*}
\Omega = \{u \in L^1(0, t_f) \, | \, 0 \leq u \leq 1 \, \} \, .
\end{equation*}

\begin{theorem}
\label{the:thm}
The optimal control problem \eqref{mod:simple:Castillo:1997:control}, \eqref{eq:cost:min:example}
with fixed initial conditions $S(0)$, $I_1(0)$, $L_1(0)$ and $T(0)$ and fixed
final time $t_f$, admits an unique solution
$\left(S^*(\cdot), I_1^*(\cdot), L_1^*(\cdot), T^*(\cdot)\right)$
associated to an optimal control $u^*(\cdot)$ on $[0, t_f]$.
Moreover, there exists adjoint functions $\lambda_1^*(\cdot)$, $\lambda_2^*(\cdot)$,
$\lambda_3^*(\cdot)$ and $\lambda_4^*(\cdot)$ such that
\begin{equation}
\label{adjoint_function:example}
\begin{cases}
\dot{\lambda^*_1}(t) =  \lambda^*_1(t) \left( \frac{\beta}{N} c I_1^*(t)+\mu \right)
- \lambda_2^*(t) \frac{\beta}{N} c I_1^*(t), \\[0.1 cm]
\dot{\lambda^*_2}(t) = \lambda_2^*(t)((\mu + r_1) + (1-u^*(t))k_1)
- \lambda^*_3(t) (1-u^*(t))k_1 - \lambda_4^*(t) r_1,\\[0.1 cm]
\dot{\lambda^*_3}(t) = -A +\lambda_1^*(t) \frac{\beta}{N} c S^*(t)
-\lambda_2^*(t)\left(\frac{\beta}{N} c S^*(t) + \sigma \frac{\beta}{N} c T^*(t) \right)\\
\qquad\quad + \lambda_3^*(t)(\mu + r_2 + d_1)
- \lambda_4^*(t)\left(r_2 - \sigma \frac{\beta}{N} c T*(t)) \right), \\[0.1 cm]
\dot{\lambda^*_4}(t) =-\lambda_2^*(t) \sigma \frac{\beta}{N} c I_1^*(t)
+ \lambda_4^*(t) \left( \sigma \frac{\beta}{N} c I_1^*(t) - \mu \right) \, ,
\end{cases}
\end{equation}
with transversality conditions
\begin{equation*}
\lambda^*_i(t_f) = 0,
\quad i=1, \ldots, 4 \, .
\end{equation*}
Furthermore,
\begin{equation}
\label{optcontrols:example}
u^*(t) = \min \left\{ \max \left\{0, \frac{k_1}{B}L_1^*(t) \left(\lambda^*_3(t)
- \lambda^*_2(t)\right)\right\}, 1 \right\} \, .
\end{equation}
\end{theorem}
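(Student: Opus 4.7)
The plan is to split the proof into three parts corresponding to the three claims of the theorem: (i) existence and uniqueness of an optimal pair, (ii) derivation of the adjoint system with transversality conditions, and (iii) explicit characterization of $u^*$ via the Hamiltonian minimization condition. Part (ii) and (iii) are the deterministic output of Pontryagin's Minimum Principle once (i) is in hand, so the real work is in establishing (i); the remaining steps are essentially calculus.

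For existence I would invoke a standard result of Fleming--Rishel / Cesari type. The hypotheses to verify are: (a) $\Omega$ is nonempty (take $u \equiv 0$ and observe that the resulting Cauchy problem for \eqref{mod:simple:Castillo:1997:control} has a global solution, since with $N$ constant the right-hand side is locally Lipschitz in the state and the state is \emph{a priori} bounded by $N$); (b) $\Omega$ is convex and closed in $L^1(0,t_f)$; (c) the right-hand side $f(X,u)$ is bounded above by a linear function in $(X,u)$; (d) the integrand $g(X,u)=A\,I_1+\tfrac{B}{2}u^2$ is convex in $u$ (in fact strictly convex, since $B>0$); and (e) there exist constants $c_1>0$ and $c_2$ with $g(X,u)\geq c_1 |u|^2-c_2$, which is immediate with $c_1=B/2$. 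For uniqueness I would argue by contradiction: two optimal pairs would give, after subtracting the corresponding state equations, a Gronwall estimate forcing equality of the states on $[0,t_f]$ provided $t_f$ is small enough, or alternatively, appeal to the strict convexity of $g$ in $u$ combined with the fact that the control enters the dynamics linearly, which makes the reduced cost functional strictly convex in $u$.

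With existence secured, I would apply the Pontryagin Minimum Principle (as stated in \eqref{control:system:OCP}--\eqref{transversality:cond:OCP}) by writing the Hamiltonian
\begin{equation*}
\begin{split}
H &= A\,I_1+\tfrac{B}{2}u^2+\lambda_1\!\left[\Lambda-\tfrac{\beta}{N}cSI_1-\mu S\right]\\
&\quad+\lambda_2\!\left[\tfrac{\beta}{N}cSI_1-(\mu+r_1)L_1-(1-u)k_1L_1+\sigma\tfrac{\beta}{N}cTI_1\right]\\
&\quad+\lambda_3\!\left[(1-u)k_1L_1-(\mu+r_2+d_1)I_1\right]+\lambda_4\!\left[r_1L_1+r_2I_1-\sigma\tfrac{\beta}{N}cTI_1-\mu T\right].
\end{split}
\end{equation*}
The adjoint system \eqref{adjoint_function:example} is then obtained by computing $\dot{\lambda}_i=-\partial H/\partial X_i$ for $X=(S,L_1,I_1,T)$; this is a direct differentiation of the four rows of $H$. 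The transversality conditions $\lambda_i^*(t_f)=0$ follow because the final state is free and the terminal cost is zero.

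Finally, for the control characterization, the minimization condition \eqref{maxcondPMP} reduces, since $H$ is a quadratic polynomial in $u$ with positive leading coefficient $B/2$, to solving $\partial H/\partial u=0$ on the interior of $[0,1]$. A short computation gives
\begin{equation*}
\frac{\partial H}{\partial u}=B\,u+k_1L_1(\lambda_2-\lambda_3),
\end{equation*}
so the unconstrained minimizer is $\bar u=k_1L_1(\lambda_3-\lambda_2)/B$. Projecting onto the admissible interval $[0,1]$ yields the claimed formula \eqref{optcontrols:example}. The main obstacle in this program is the uniqueness part of (i): global uniqueness in $t_f$ is not automatic, so I would either restrict to $t_f$ small and invoke a Gronwall/Lipschitz argument on the optimality system (state plus adjoint plus control substitution), or argue via strict convexity of the reduced functional $u\mapsto C(u)$ exploiting that the control enters affinely in the dynamics and quadratically in $g$; the rest of the proof is a mechanical verification.
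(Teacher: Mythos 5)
Your proposal follows essentially the same route as the paper: existence from a Fleming--Rishel/Cesari-type theorem using convexity of the integrand in $u$ together with the Lipschitz and linear-growth structure of the dynamics, the adjoint system and transversality conditions from the Pontryagin minimum principle, and the projection formula for $u^*$ from minimizing the Hamiltonian, which is quadratic in $u$ with leading coefficient $B/2>0$; your computations of $\partial H/\partial u$ and of the adjoint equations agree with the statement. The one substantive point of difference is the uniqueness step: the paper argues uniqueness via the boundedness of the state and adjoint functions and the Lipschitz property of the coupled state--adjoint system (the standard argument, which, as you rightly caution, strictly speaking yields uniqueness only for $t_f$ sufficiently small, even though the paper asserts it on the whole interval), whereas your proposed alternative via strict convexity of the reduced functional $u \mapsto C(u)$ does not go through: the control-to-state map $u \mapsto I_1[u]$ is nonlinear (the control multiplies $L_1$ and the dynamics are themselves nonlinear), so $\int_0^{t_f} A\, I_1[u]\, dt$ need not be convex in $u$ and only the term $\tfrac{B}{2}\int_0^{t_f} u^2\, dt$ is strictly convex. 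Keep the Gronwall/Lipschitz argument on the optimality system as your uniqueness proof and drop the convexity alternative.
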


\begin{proof}
Existence of an optimal solution $\left(S^*, L_1^*, I_1^*, T^*\right)$,
associated to an optimal control $u^*$, comes from
the convexity of the integrand of the cost functional \eqref{cost:example} with respect
to the control $u$ and the Lipschitz property of the state system
with respect to state variables $\left(S, L_1, I_1, T\right)$
(see, \textrm{e.g.}, \cite{Cesari_1983,Fleming_Rishel_1975}).
System \eqref{adjoint_function:example} is derived from the Pontryagin minimum principle
(see \eqref{adjointsystem:OCP}, \cite{Pontryagin_et_all_1962})
and the optimal control \eqref{optcontrols:example}
comes from the minimization condition \eqref{maxcondPMP}.
The optimal control given by \eqref{optcontrols:example}
is unique along all time interval due to the boundedness
of the state and adjoint functions, the Lipschitz property
of systems \eqref{mod:simple:Castillo:1997:control}
and \eqref{adjoint_function:example} and the fact that the problem is autonomous.
\end{proof}

We end by presenting some numerical simulations with the following parameter values:
$\mu = 0.0143$, $c = 1$, $\beta = 13$, $\sigma = 1$, $r_1 = 2$, and $r_2 = 1$
(see \cite{Castillo_Chavez_1997}). The initial conditions are: $S(0)=(76/120)N$,
$L_1(0) = (38/120)N$, $I_1(0) = (5/120)N$, and $T(0) = (1/120)N$ (see \cite{SLenhart_2002}).
We start showing that the implementation of the control has a positive impact
on the reduction of infectious individuals. In Figure~\ref{fig:I1:T:with:without:cont}
we observe that the fraction of infectious individuals decreases significatively
when control strategies are implemented.
\begin{figure}[!htb]
\centering
\subfloat[\footnotesize{$I_1/N$}]{\label{I1:with:without:u}
\includegraphics[width=0.51\textwidth]{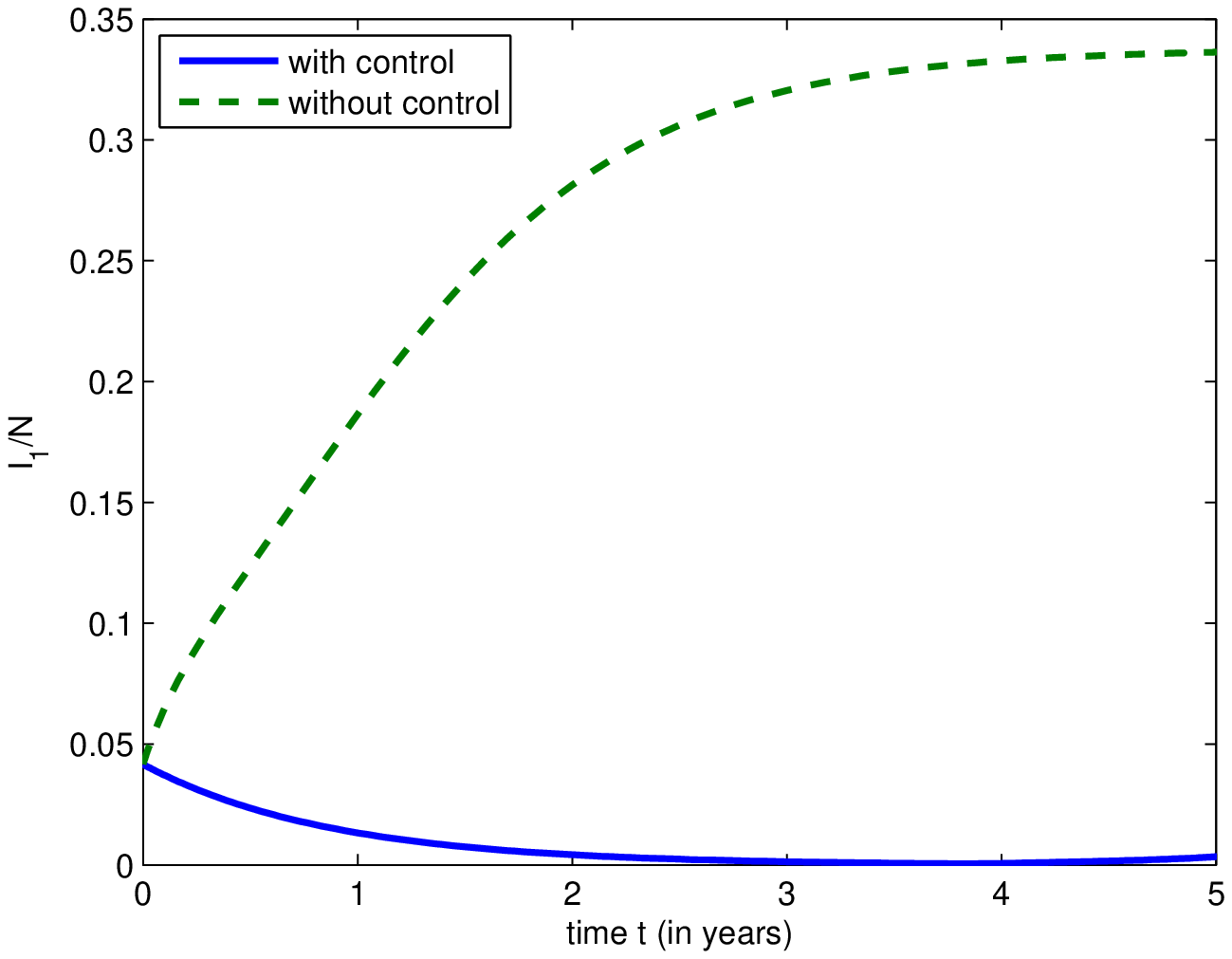}}
\subfloat[\footnotesize{Optimal control $u$}]{\label{u:with:without:u}
\includegraphics[width=0.51\textwidth]{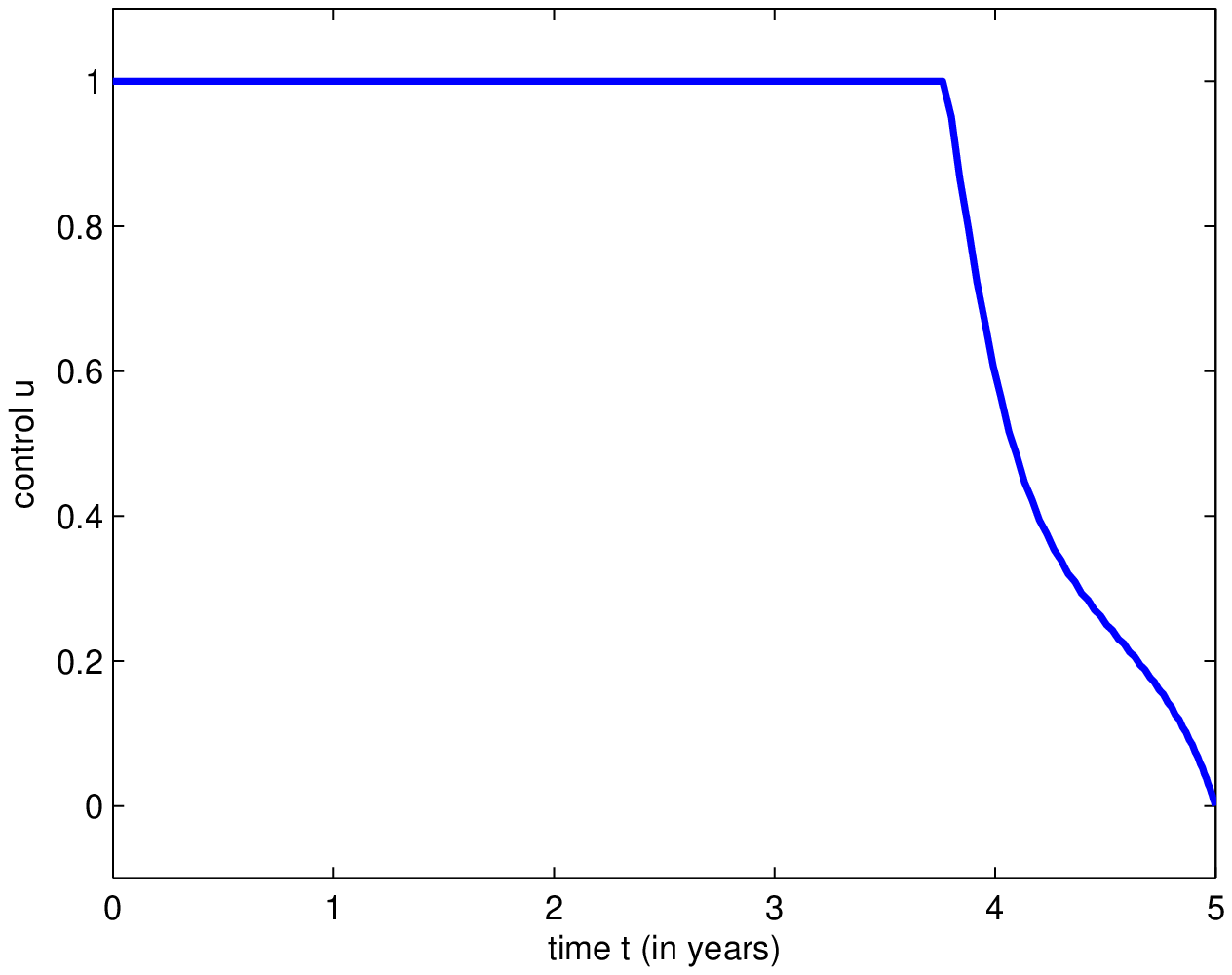}}
\caption{Fraction of infectious individuals, with and without control,
and optimal control (for $k_1 = 1$, $A = 1$, $B = 100$ and $N = 10000$).}
\label{fig:I1:T:with:without:cont}
\end{figure}
If our aim is to reduce the number of infectious individuals giving special
attention to keep the cost of implementation of the control measures low,
then the weight constant $B$ should take bigger values than $A$. Take, without loss of generality,
$A = 1$ and $B \geq 50$. In this case, we observe that the fraction of infectious individuals $I_1/N$
and the optimal control $u$ depend on the rate of progression to active TB
(see Figure~\ref{fig:I1:u:varK}) and the size $N$ of total population (see Figure~\ref{fig:I1:u:varN}).
The period of time that the optimal control attains its maximum value increases with $B$
(see Figure~\ref{fig:I1:u:varB}). However, contrary to what is desired, the fraction
of infectious individuals starts increasing after some specific period of time.
This can be avoided if the rate $k$ of progression to active TB is low (see Figure~\ref{fig:I1:u:varK}),
or if we give more importance to the decrease of the number of infectious individuals
than to the cost of implementation of the control policies, that is, if we increase the value
of the weight constant $A$. In fact, for $A \geq B$ the fraction of infectious individuals
never increases in all treatment period, regardless the size of the population $N$
or the value of $k$ (see Figure~\ref{fig:I:A100:varkN}). On the other hand,
the optimal control attains the maximal value almost all the treatment period,
which implies a higher cost implementation of control measures (see Figure~\ref{fig:u:A100:varkN}).


\begin{figure}[!htb]
\centering
\subfloat[\footnotesize{$I_1/N$}]{\label{I1:varK}
\includegraphics[width=0.51\textwidth]{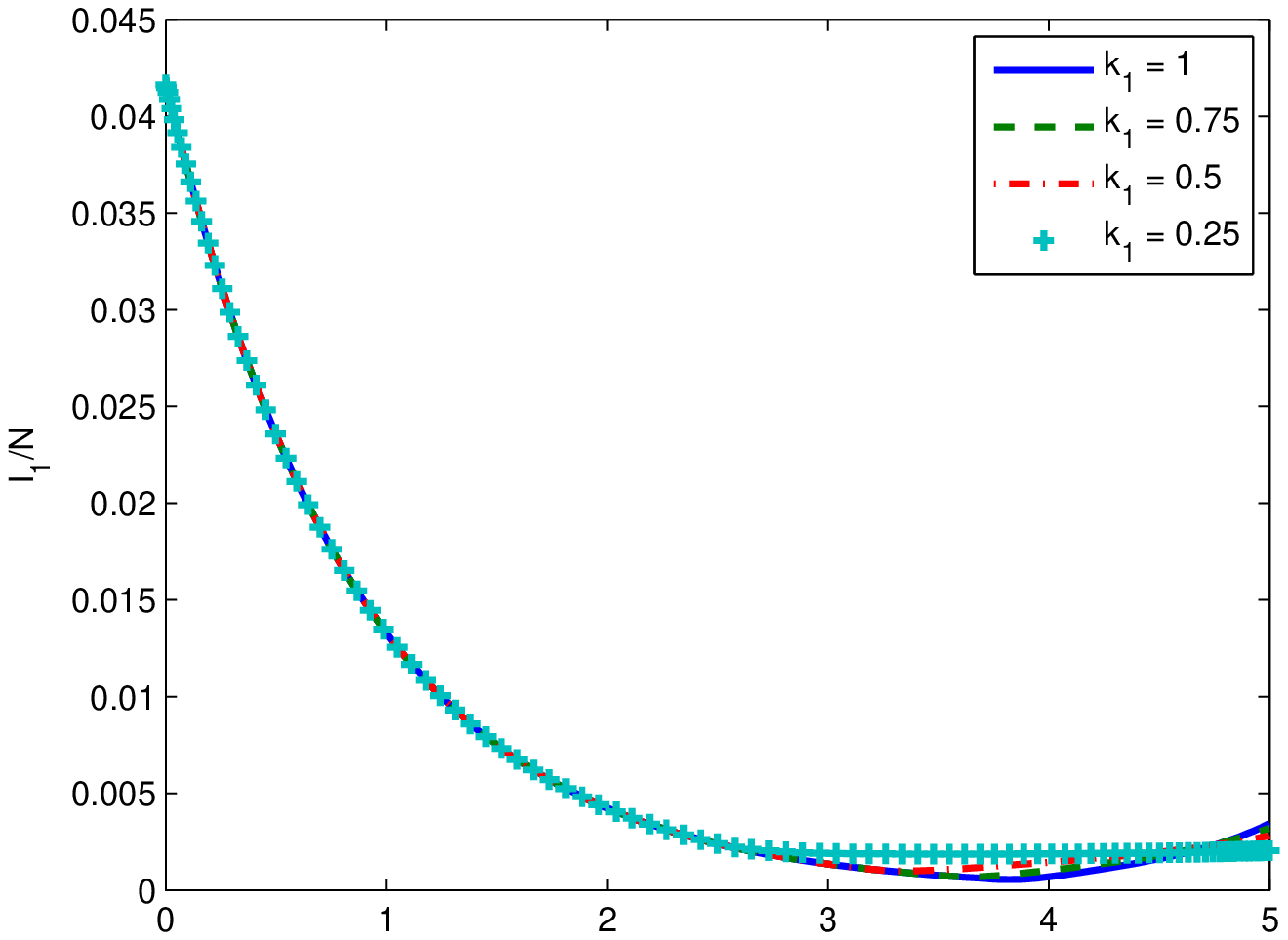}}
\subfloat[\footnotesize{Optimal control $u$}]{\label{u:varK}
\includegraphics[width=0.51\textwidth]{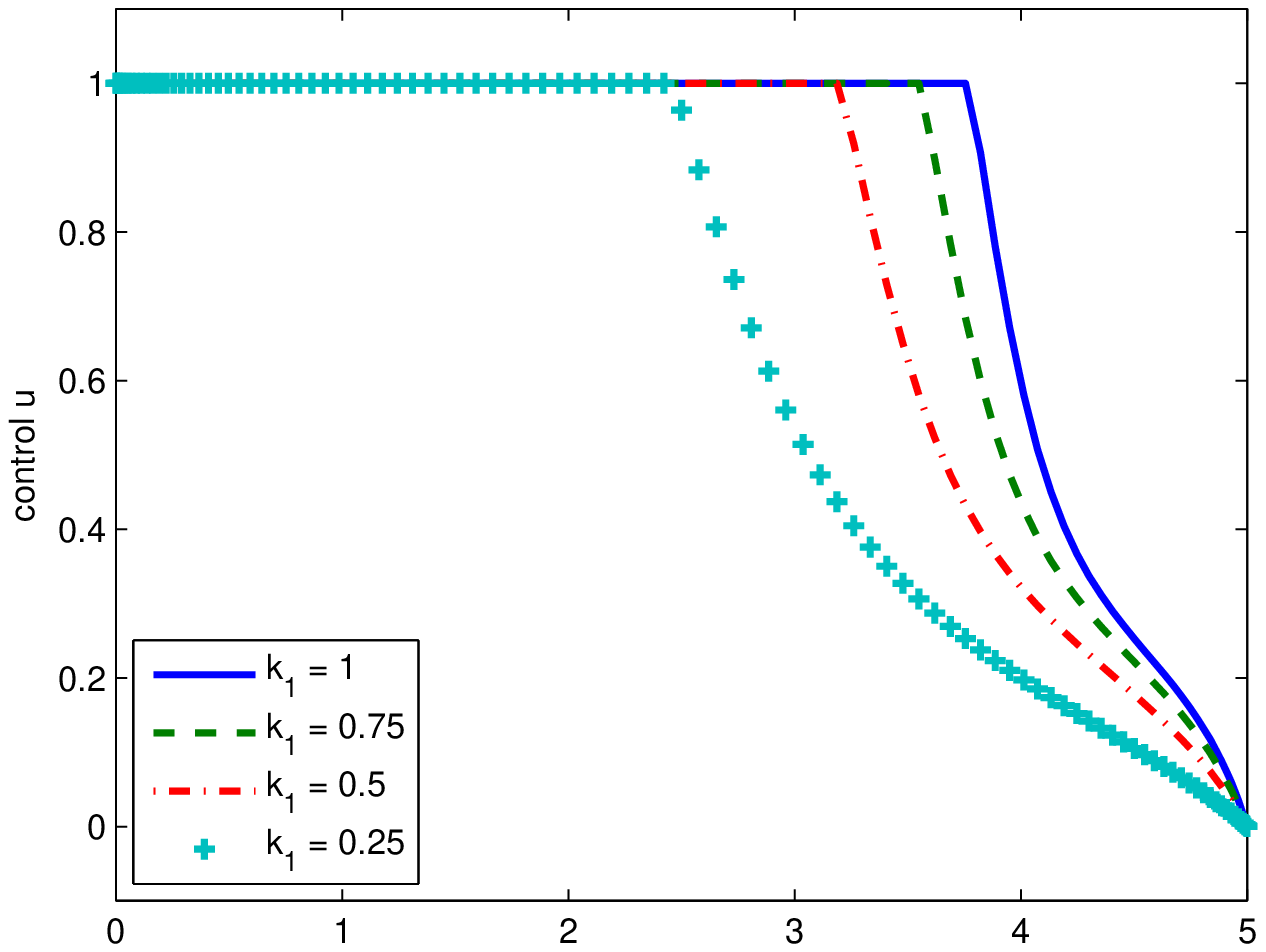}}
\caption{Fraction of infectious individuals and optimal control for
$k_1 \in \{0.25, 0.5, 0.75, 1\}$ (with $B = 100$, $A = 1$ and $N = 10000$).}
\label{fig:I1:u:varK}
\end{figure}


\begin{figure}[!htb]
\centering
\subfloat[\footnotesize{$I_1/N$}]{\label{I1:varN}
\includegraphics[width=0.51\textwidth]{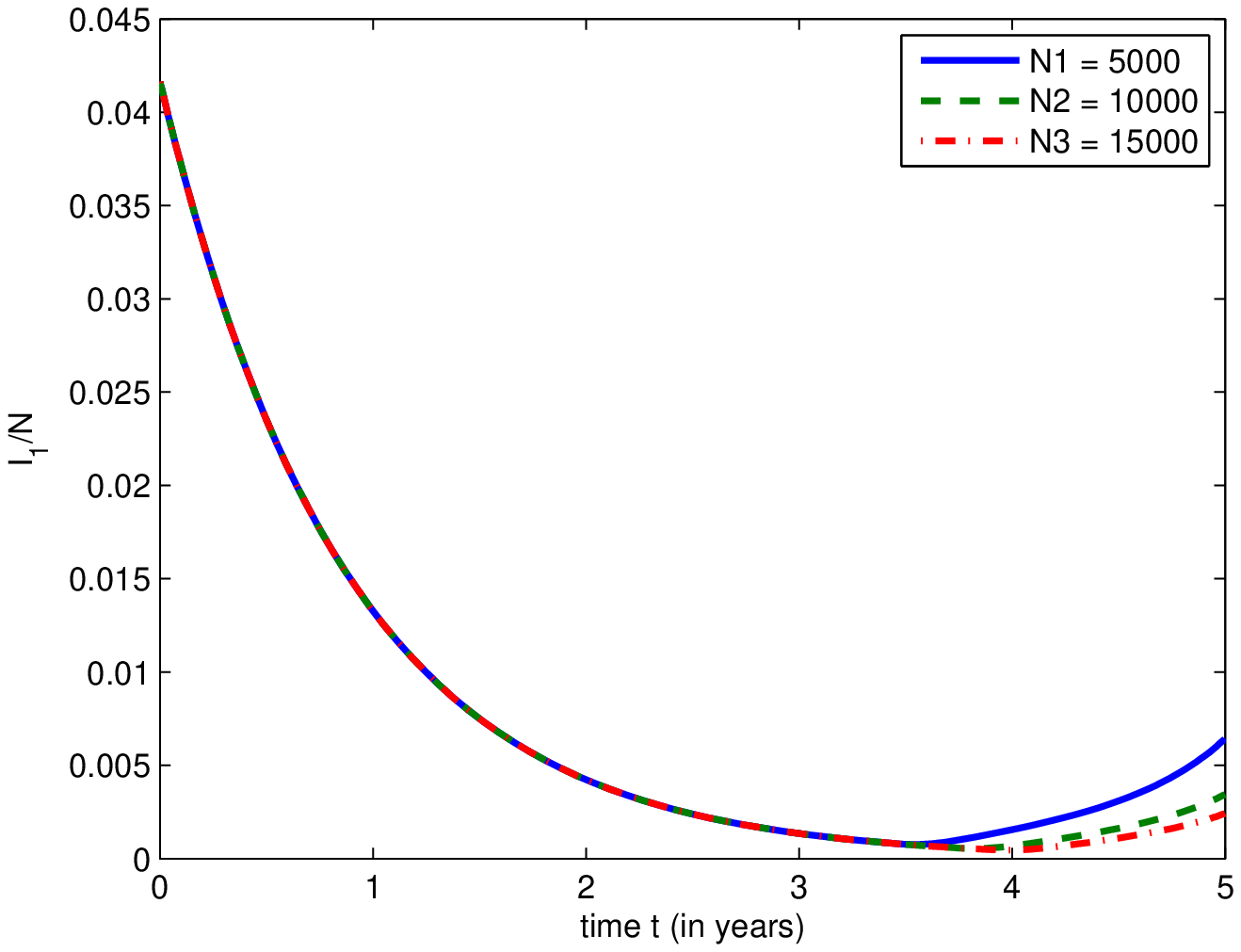}}
\subfloat[\footnotesize{$u$}]{\label{u:varN}
\includegraphics[width=0.51\textwidth]{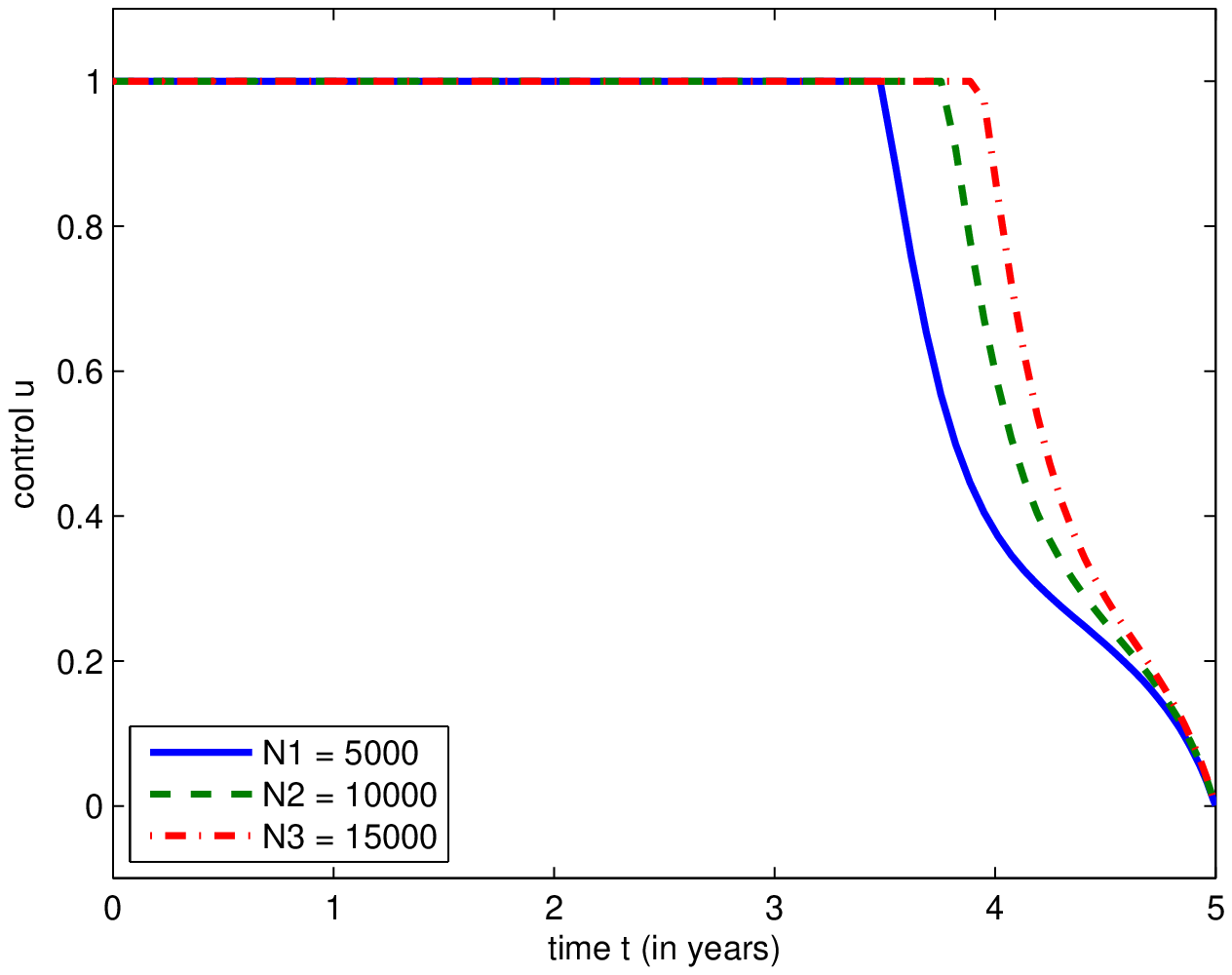}}
\caption{Fraction of infectious individuals and optimal control for
$N \in \{5000, 10000, 15000\}$ (with $B = 100$, $A = 1$ and $k_1 = 1$).}
\label{fig:I1:u:varN}
\end{figure}


\begin{figure}[!htb]
\centering
\subfloat[\footnotesize{}]{\label{I1:varB}
\includegraphics[width=0.51\textwidth]{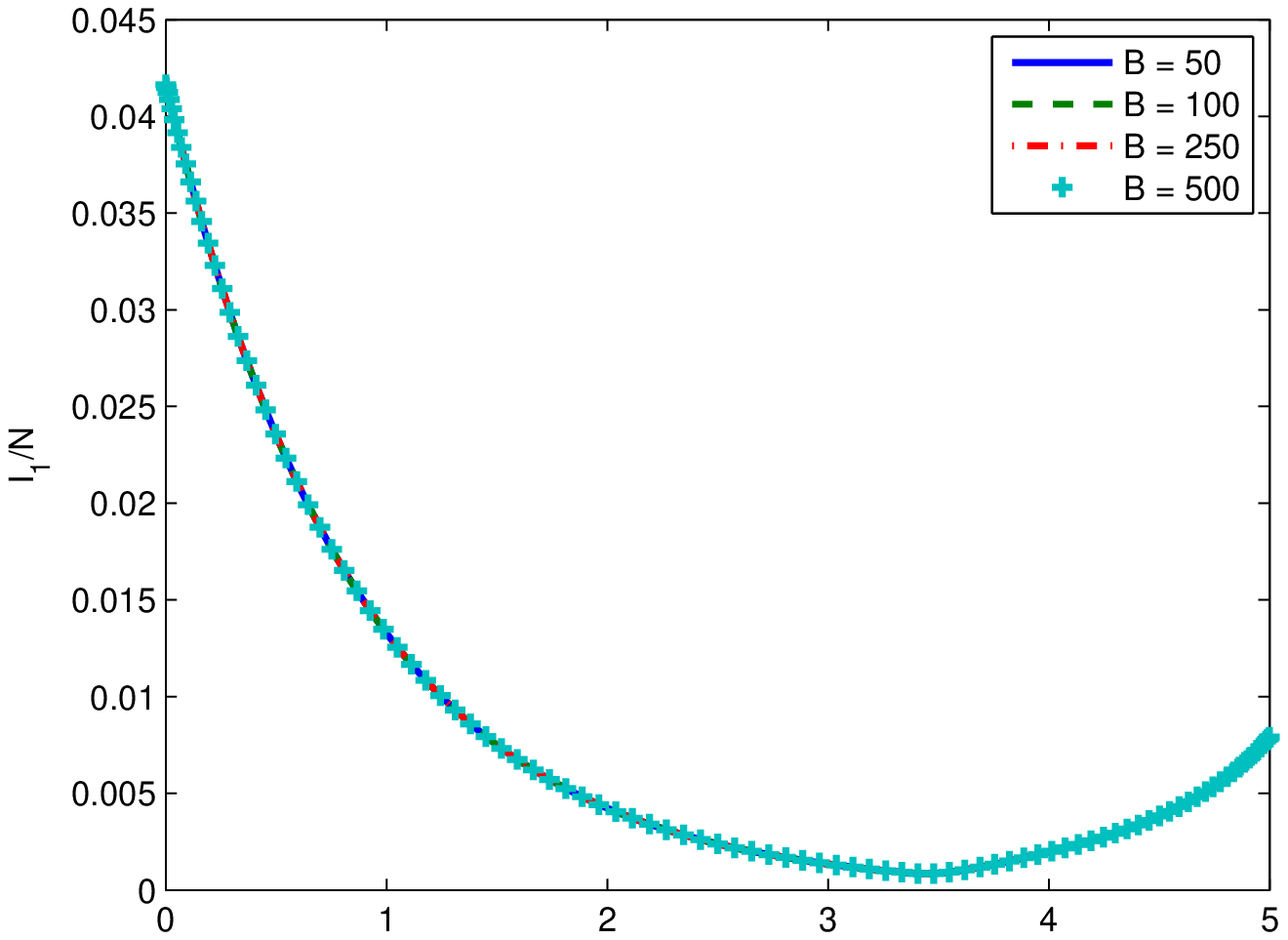}}
\subfloat[\footnotesize{}]{\label{u:varB}
\includegraphics[width=0.51\textwidth]{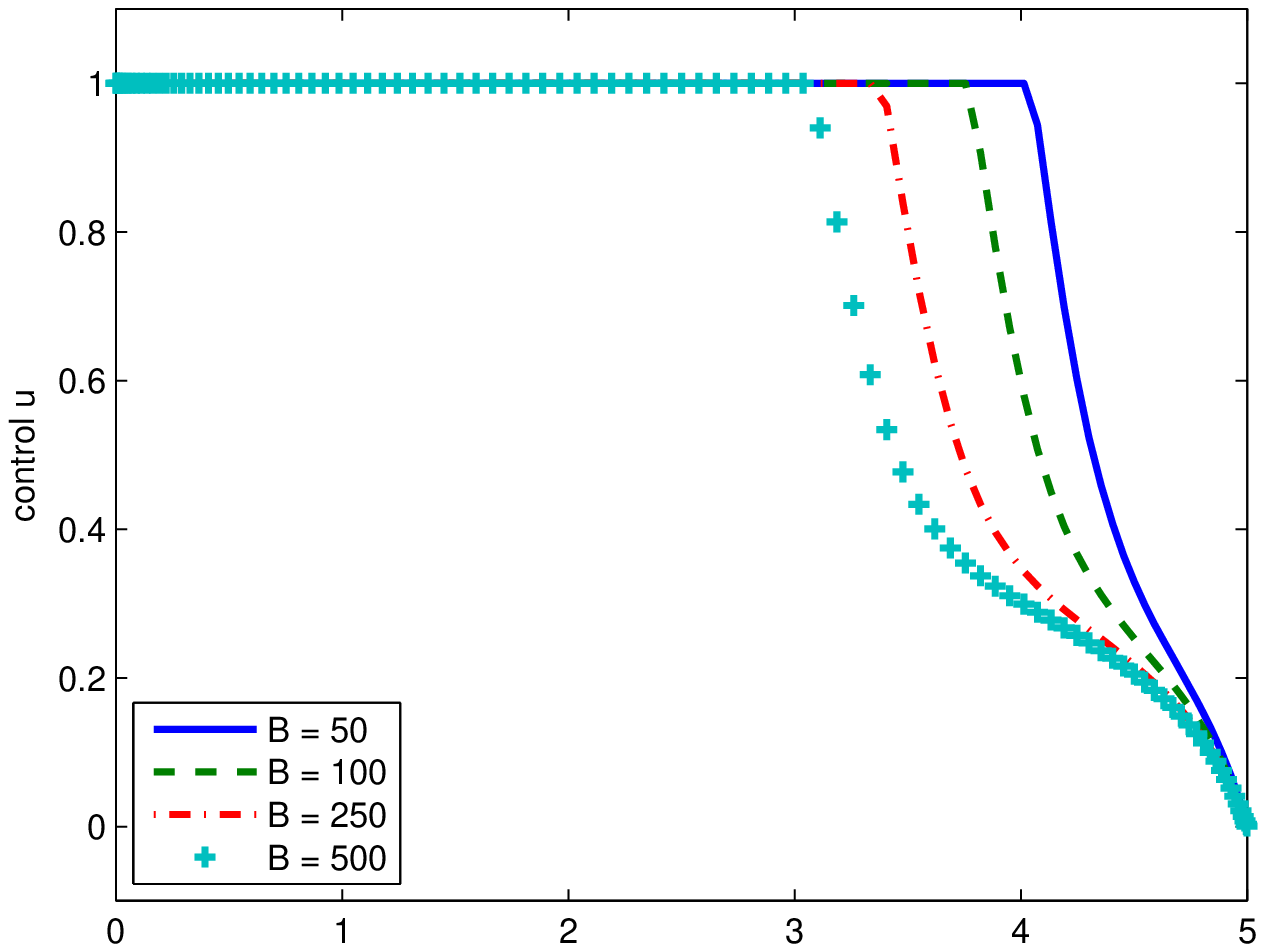}}
\caption{Fraction of infectious individuals and optimal control for
$B \in \{50, 100, 250, 500\}$ (with $A = 1$, $N = 10000$ and $k_1 = 1$).}
\label{fig:I1:u:varB}
\end{figure}


\begin{figure}[!htb]
\centering
\subfloat[\footnotesize{$I_1/N$}]{\label{I1:A100:vark}
\includegraphics[width=0.51\textwidth]{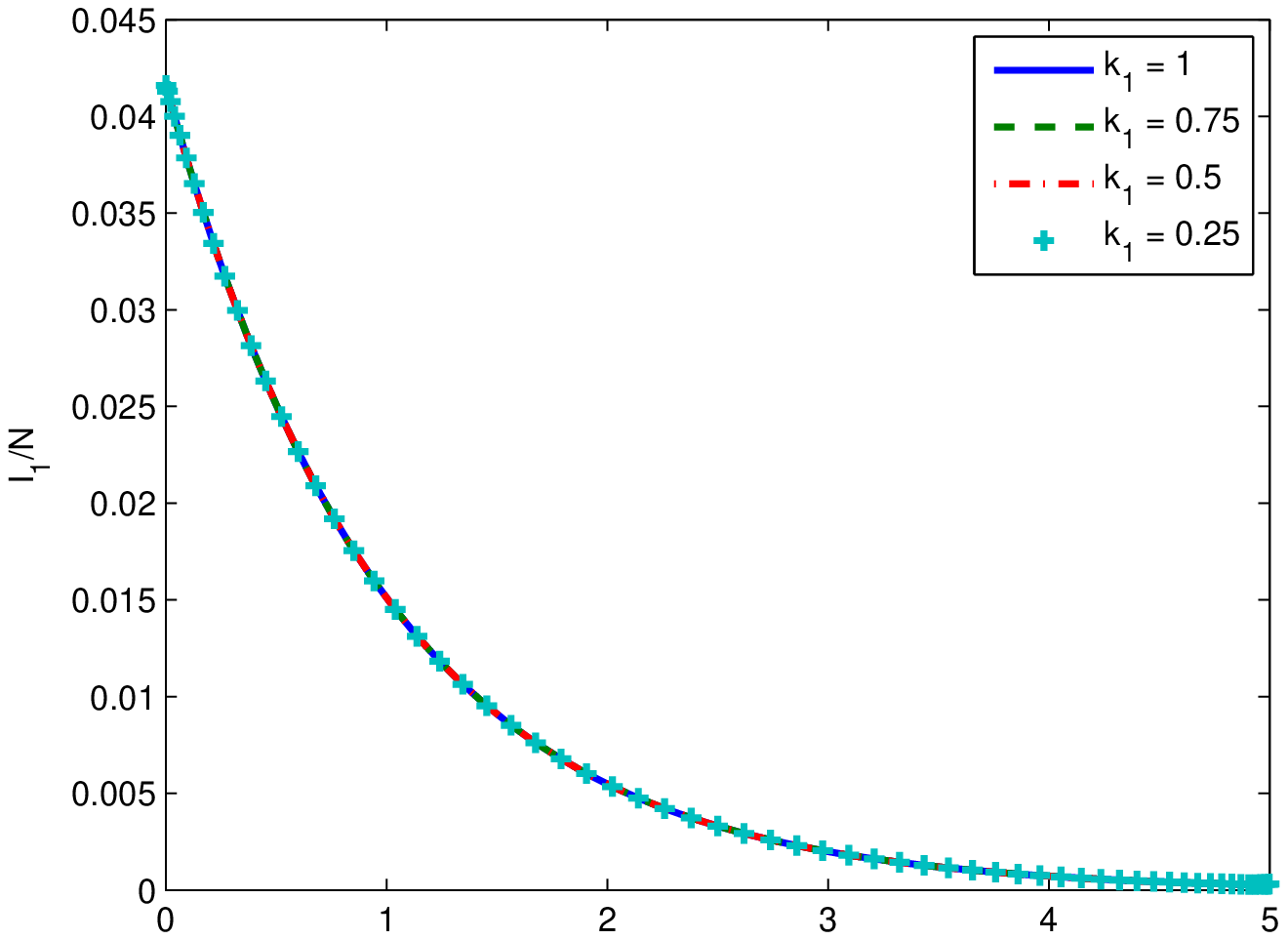}}
\subfloat[\footnotesize{$I_1/N$}]{\label{I1:A100:varN}
\includegraphics[width=0.51\textwidth]{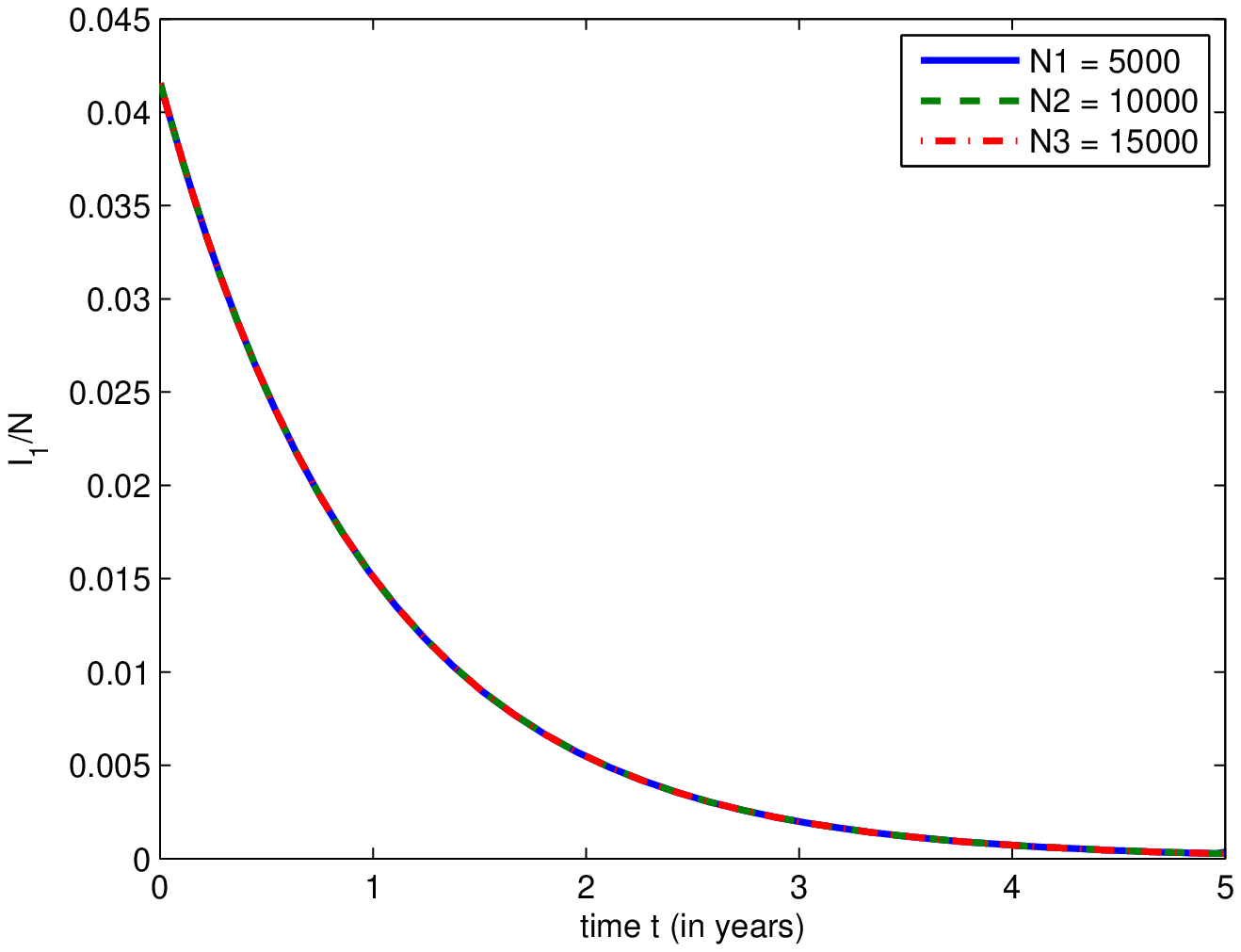}}
\caption{Fraction of infectious individuals for
$A=B=100$ (with $k_1 \in \{0.25, 0.5, 0.75, 1\}$ and $N \in \{5000, 10000, 15000\}$).}
\label{fig:I:A100:varkN}
\end{figure}


\begin{figure}[!htb]
\centering
\subfloat[\footnotesize{$u$}]{
\includegraphics[width=0.51\textwidth]{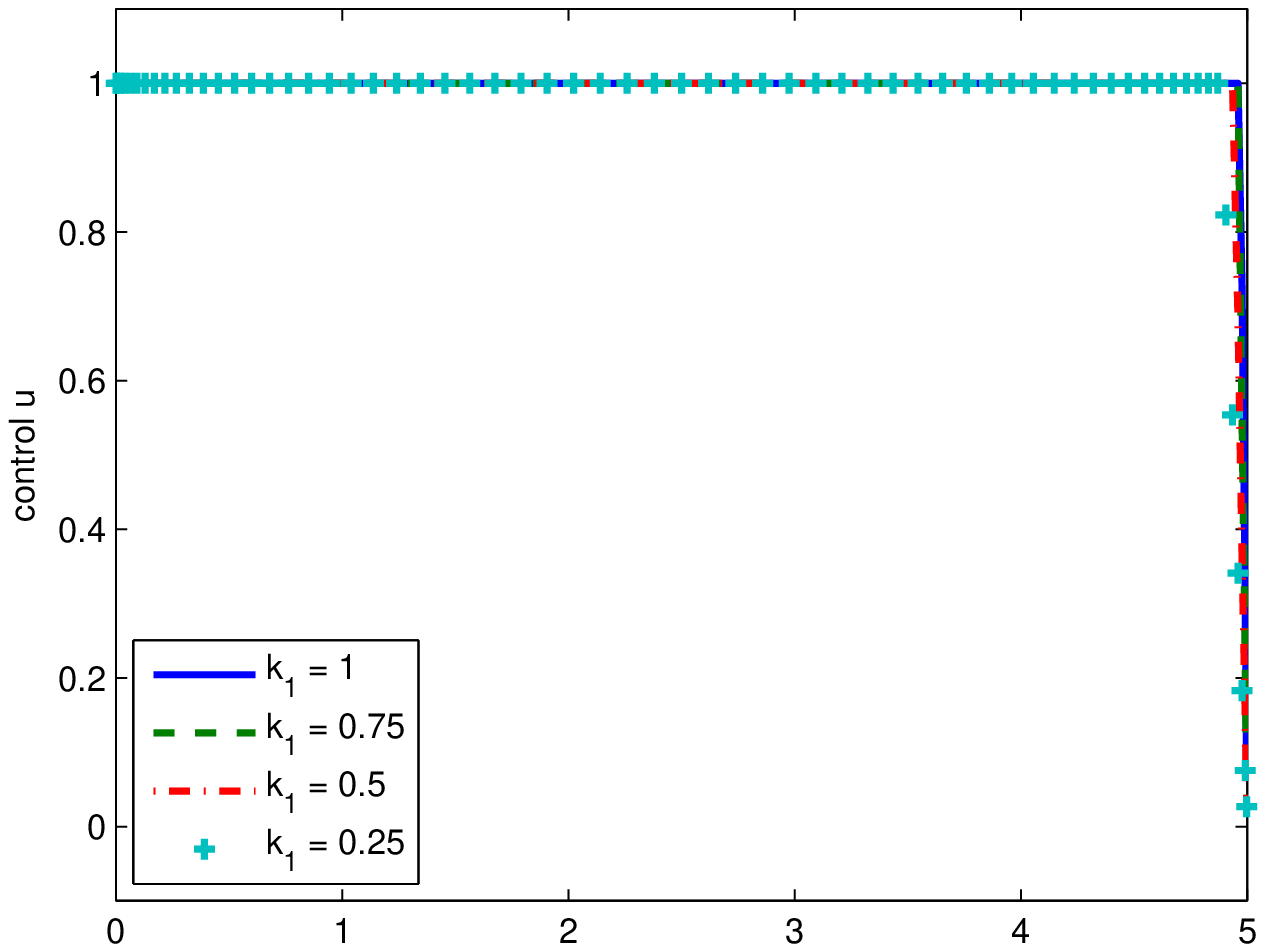}}
\subfloat[\footnotesize{$u$}]{
\includegraphics[width=0.51\textwidth]{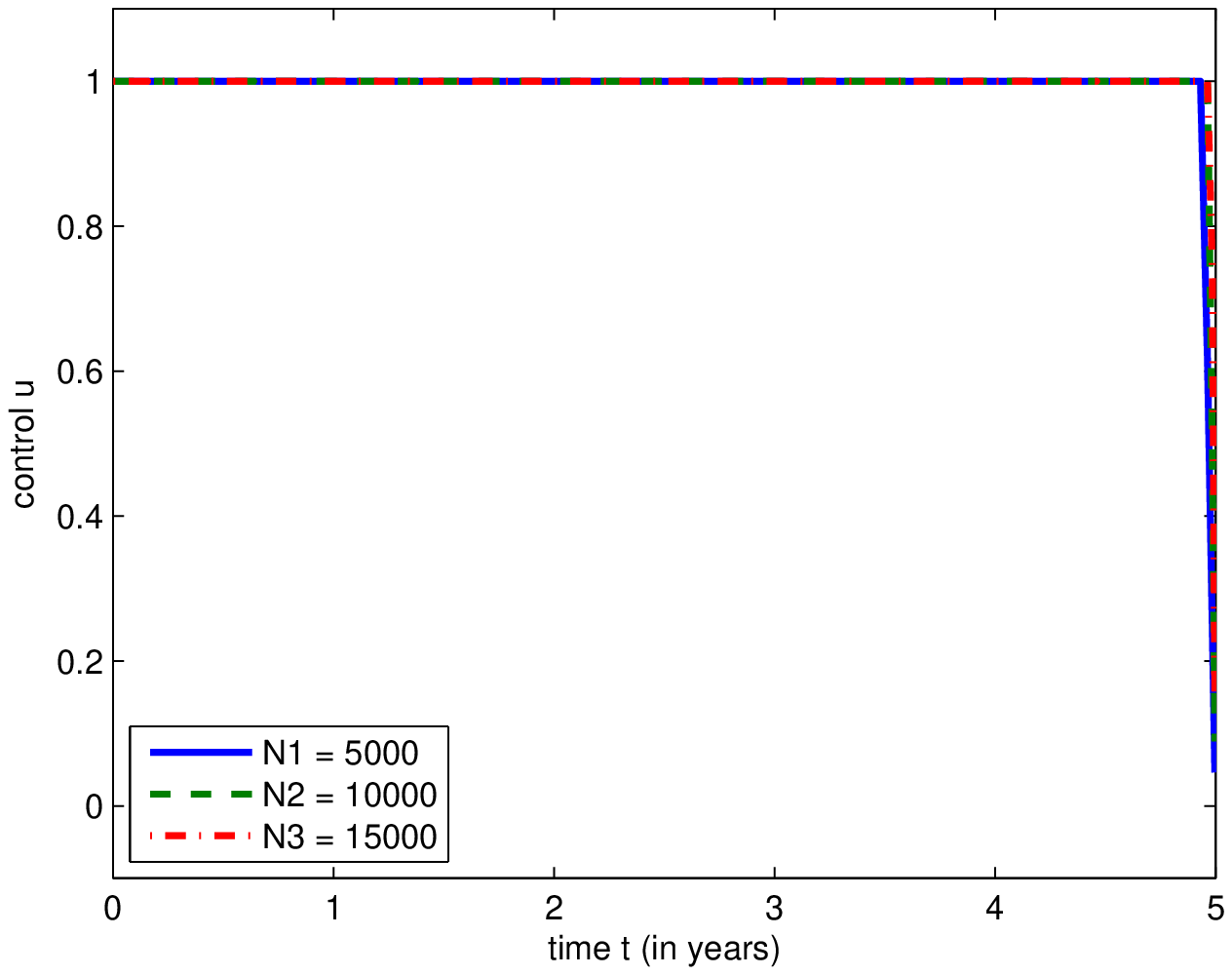}}
\caption{Optimal control for
$A=B=100$ (with $k_1 \in \{0.25, 0.5, 0.75, 1\}$ and $N \in \{5000, 10000, 15000\}$).}
\label{fig:u:A100:varkN}
\end{figure}


\section{Conclusion}
\label{sec:conc}

A state of the art of uncontrolled and controlled
mathematical models for tuberculosis (TB)
has been presented. In particular, the paper reviews
the works on optimal control of various models for the disease transmission
dynamics of TB. Several results related to the dynamics and optimal control
of TB have been reviewed and summarized. Two control strategies,
``case finding'' and ``case holding'', are used to demonstrate the optimal control analysis.

The topics covered do not provide an exhaustive survey
but rather an illustrative overview. For instance, a TB vaccine
called BCG (Bacillus of Calmette and Gu\'{e}rin) has been used especially
for children for several decades, and in some papers a dynamical system with vaccination
has been formulated and analyzed (see, e.g., \cite{Ref2:01}),
but the subject has not been covered here. The example provided
(see Section~\ref{sec:example}) is also very simple: only a single-strain
TB dynamics with SEIRS model is presented. The reader interested
in a model to study the optimal control of a two-strain
(drug-sensitive and drug-resistant) TB dynamics is referred to
\cite{SLenhart_2002}.

Current research includes the development of co-infection mathematical models for TB
and human immunodeficiency virus (HIV) transmission dynamics \cite{MyID:300}.
The novelty of \cite{MyID:300}, with respect to available results in the literature,
is considering both TB and acquired immune deficiency syndrome (AIDS) treatment for individuals
with both infectious diseases. Results show that TB treatment for individuals with only TB infection
reduces the number of individuals that become co-infected with TB and HIV/AIDS,
and reduces the diseases (TB and AIDS) induced deaths. They also show that the treatment
of individuals with only AIDS also reduces the number of co-infected individuals. Further,
TB-treatment for co-infected individuals in the active and latent stage of TB disease,
implies a decrease of the number of individuals that passes from HIV-positive to AIDS.
Application of optimal control to such combined TB-HIV/AIDS co-infection models
poses a number of numerical challenges and is under investigation.
This will be addressed in a forthcoming paper.


\section*{Acknowledgments}

This work was partially presented at the Thematic session
\emph{Control of diseases and epidemics},
MECC 2013 --- International Conference Planet Earth,
Mathematics of Energy and Climate Change, 25-27 March 2013,
Calouste Gulbenkian Foundation (FCL), Lisbon, Portugal.
It was supported by Portuguese funds through the
\emph{Center for Research and Development in Mathematics and Applications} (CIDMA),
and \emph{The Portuguese Foundation for Science and Technology} (FCT),
within project PEst-OE/MAT/UI4106/2014. Silva was also supported by FCT through
the post-doc fellowship SFRH/BPD/72061/2010, Torres by project PTDC/EEI-AUT/1450/2012.
The authors are very grateful to two anonymous referees,
for valuable remarks and comments, which
significantly contributed to the quality of the paper.



\end{document}